\newtheorem{theorem}{Theorem}[section]
\newtheorem{lemma}[theorem]{Lemma}
\newtheorem{corollary}[theorem]{Corollary}
\theoremstyle{definition}
\theoremstyle{remark}
\numberwithin{equation}{section}
\begin{document}

% \title[short text for running head]{full title}
\title{Karhunen-Lo\`eve decomposition of Gaussian measures on Banach spaces}

%    Only \author and \address are required; other information is
%    optional.  Remove any unused author tags.

%    author one information
% \author[short version for running head]{name for top of paper}
\author[X. Bay]{Xavier Bay}
\address{Mines Saint-Etienne, 158 cours Fauriel, 42023 Saint-Etienne, France (LIMOS UMR 6158)}
%\curraddr{}
\email{xavier.bay@emse.fr}
\thanks{}

%    author two information
\author[J.-C. Croix]{Jean-Charles Croix}
\address{Mines Saint-Etienne, 158 cours Fauriel, 42023 Saint-Etienne, France (LIMOS UMR 6158)}
%\curraddr{}
\email{jean-charles.croix@emse.fr}
\thanks{Part of this research was conducted within the frame of the Chair in Applied Mathematics OQUAIDO, gathering partners in technological research (BRGM, CEA, IFPEN, IRSN, Safran, Storengy) and academia (CNRS, Ecole Centrale de Lyon, Mines Saint-Etienne, University of Grenoble, University of Nice, University of Toulouse) around advanced methods for Computer Experiments.}

%    \subjclass is required.
\subjclass[2010]{Primary 60B11, 60B12;
	Secondary 28C20}

\date{}

\dedicatory{}

%    "Communicated by" -- provide editor's name; required.
%\commby{Zhen-Qing Chen}

%    Abstract is required.
\begin{abstract}
	The study of Gaussian measures on Banach spaces is of active interest both in pure and applied mathematics. In particular, the spectral theorem for self-adjoint compact operators on Hilbert spaces provides a canonical decomposition of Gaussian measures on Hilbert spaces, the so-called Karhunen-Lo\`eve expansion. In this paper, we extend this result to Gaussian measures on Banach spaces in a very similar and constructive manner. In some sense, this can also be seen as a generalization of the spectral theorem for covariance operators associated to Gaussian measures on Banach spaces. In the special case of the standard Wiener measure, this decomposition matches with Paul L\'evy's construction of Brownian motion.
\end{abstract}

\maketitle

\section{Preliminaries on Gaussian measures}

Let us first remind a few properties of Gaussian measures on Banach spaces. Our terminology and notations are essentially taken from \cite{Bog1998} (alternative presentations can be found in \cite{Kuo1975}, \cite{Vak1987} or \cite{Hai2009}). In this work, we consider a separable Banach space $X$, equipped with its Borel $\sigma$-algebra $\mathcal{B}(X)$. Note that every probability measure on $(X,\mathcal{B}(X))$ is Radon and that Borel and cylindrical $\sigma$-algebras are equal in this setting.
\newline
A probability measure $\gamma$ on $(X,\mathcal{B}(X))$ is Gaussian if and only if for all $f\in X^*$ (the topological dual space of $X$), the pushforward measure $\gamma\circ f^{-1}$ (of $\gamma$ through $f$) is a Gaussian measure on $(\mathbb{R},\mathcal{B}(\mathbb{R}))$. Here, we only consider the case $\gamma$ centered for simplicity (the general case being obtained through a translation). An important tool in the study of a (Gaussian) measure is its characteristic functional $\hat{\gamma}$ (or Fourier transform)
\begin{equation*}
\hat{\gamma}:f\in X^*\to\hat{\gamma}(f)=\int_Xe^{i\langle x,f\rangle_{X,X^*}}\gamma(dx)\in\mathbb{C},
\end{equation*}
where $\langle.,.\rangle_{X,X^*}$ is the duality pairing. Since $\gamma$ is a centered Gaussian measure, we have
\begin{equation}
\forall f\in X^*,\;\hat{\gamma}(f)=\exp\left(-\frac{C_\gamma(f,f)}{2}\right)
\label{eq_characteristic_functional},
\end{equation}
where $C_\gamma$ is the covariance function
\begin{equation*}
C_\gamma:(f,g)\in X^*\times X^*\to\int_X\langle x,f\rangle_{X,X^*}\langle x,g\rangle_{X,X^*}\gamma(dx)\in\mathbb{R}.
\end{equation*}
One of the most striking results concerns integrability. Indeed, using a rotation invariance principle, it has been shown that a Gaussian measure $\gamma$ admits moments (in a Bochner sense) of all orders (as a simple corollary of Fernique's theorem, see \cite{Bog1998}). Consequently, its covariance operator may be defined as
\begin{equation*}
R_\gamma: f\in X^*\to\int_X\langle x,f\rangle_{X,X^*} x\gamma(dx)\in X,
\end{equation*}
using Bochner's integral and is characterized by the following relation
\begin{equation}
\forall (f,g)\in X^*\times X^*,\;\langle R_\gamma f,g\rangle_{X,X^*}=C_\gamma(f,g).
\label{eq_covariance_operator}
\end{equation}
Most noticeably, $R_\gamma$ is a symmetric positive kernel (Hilbertian or Schwartz kernel) in the following sens:
\begin{eqnarray*}
	\forall (f,g)\in X^*\times X^*,\;\langle R_\gamma f,g\rangle_{X,X^*}&=&\langle R_\gamma g,f\rangle_{X,X^*},\\
	\forall f\in X^*,\;\langle R_\gamma f,f\rangle_{X,X^*}&\geq& 0.
\end{eqnarray*}
Furthermore, the Cameron-Martin space $H(\gamma)$ associated to $\gamma$ is the Hilbertian subspace of $X$ with Hilbertian kernel $R_\gamma$ (see \cite{Sch1964} and \cite{Aro1950} for the usual case of reproducing kernel Hilbert spaces). 
In particular, we will extensively use the so-called \textit{reproducing property}
\begin{equation*}
\forall h\in H(\gamma),\;\forall f\in X^*,\;\langle h,f\rangle_{X,X^*}=\langle h,R_\gamma f\rangle_\gamma,
\end{equation*}
where $<.,.>_\gamma$ denotes the inner product of $H(\gamma)$. Note that $H(\gamma)$ is continuously embedded in $X$ and admits $R_\gamma(X^*)$ as a dense subset. Additionally, the covariance operator has been shown to be nuclear and in particular compact (see \cite{Vak1987}, Chapter 3 for a detailed presentation and proofs).
\newline
Our objective is to decompose any Gaussian measure $\gamma$ on a (separable) Banach space $X$ which, in fact, can be done by considering any Hilbert basis of the Cameron-Martin space $H(\gamma)$. Indeed, let $(h_n)_n$ be any arbitrary orthonormal basis of $H(\gamma)$ and $(\xi_n)_n$ a sequence of independent standard Gaussian random variables defined on a probability space $(\Omega,\mathcal{F},\mathbb{P})$. Then the series
\begin{equation}
\sum_{n}\xi_n(\omega)h_n
\label{random_series},
\end{equation}
converges almost surely in $X$ and the distribution of its sum is the Gaussian measure $\gamma$ (cf. theorem 3.5.1 p. 112 in \cite{Bog1998}). When $X$ is a Hilbert space, a canonical Hilbert basis of the Cameron-Martin space $H(\gamma)$ is given by the spectral decomposition of the covariance operator $R_\gamma$ as a self-adjoint compact operator on $X$ (see Mercer's theorem in the special case $X=L^2[a,b]$ with $[a,b]$ any compact interval of $\mathbb{R}$). In this paper, we will show how to define and construct such a basis in the general case $X$ Banach by a direct generalization of the Hilbert case. In particular, this "diagonalizing" basis will be of the form $h_n=R_\gamma h_n^*$ where $h_n^*$ is in the dual space $E^*$ for all $n$. As a special case of the representation (\ref{random_series}), the corresponding decomposition in $X$ (for the strong topology) will be
\begin{equation*}
x=\sum_{n}\langle x,h_n^*\rangle_{X,X^*}h_n,
\end{equation*}
$\gamma$ almost everywhere (since $(h_n^*)_n$ is a sequence of independent standard normal random variables by the reproducing property). Roughly speaking, it means that $\gamma$ can be seen as the countable product of the standard normal distribution $N(0,1)$ on the real line:
\begin{equation*}
\gamma=\bigotimes_n\\N(0,1).
\end{equation*}
For a recent review of the interplay between covariance operators and Gaussian measures decomposition, consult \cite{Kva2014}. To see how to construct such a basis, we start with the Hilbert case.

\section{Gaussian measures on Hilbert spaces}
\label{sec_hilbert}

Hilbert geometry has nice features that are well understood, including Gaussian measures structure (see \cite{Kuo1975} and \cite{Dap2006} for a recent treatment). First of all, Riesz representation theorem allows to identify $X^*$ with $X$. As a linear operator on a Hilbert space, the covariance operator $R_\gamma$ of a Gaussian measure $\gamma$ is self-adjoint and compact. Spectral theory exhibits a particular Hilbert basis of $X$ given by the set $(x_n)_n$ of eigenvectors of $R_\gamma$. Using this specific basis, the covariance operator is
\begin{equation*}
R_\gamma: x\in X\to\sum_{n}\lambda_n\langle x,x_n\rangle_X x_n\in X,
\end{equation*}
where $\langle.,.\rangle_X$ is the inner product of $X$. A simple normalization, namely $h_n=\sqrt{\lambda_n}x_n$, provides a Hilbert basis of $H(\gamma)$. The nuclear property of $R_\gamma$ simplifies to
\begin{equation*}
\sum_{n}{\lVert h_n\rVert^2_X}=\sum_{n}\lambda_n<+\infty.
\end{equation*}
Using the terminology of random elements, let $Y$ be the infinite-dimensional vector defined almost surely by
\begin{equation*}
	Y(\omega)=\sum_{n}\xi_n(\omega)h_n=\sum_{n}\sqrt{\lambda_{n}}\xi_n(\omega)x_n,
\end{equation*}
where $(\xi_n)_n$ is a sequence of independent standard normal random variables. Then $\gamma$ is the distribution of the Gaussian vector $Y$. In the context of stochastic processes, this representation is well-known as the Karhunen-Lo\`eve expansion (\cite{Kar1947}, \cite{Loe1960}) of the process $Y=(Y_t)_{t \in T}$ (assumed to be mean-square continuous over a compact interval $T=[a,b]$ of $\mathbb{R}$).
\newline
In order to extend this spectral decomposition to the Banach case, let us recall the following simple property (where $B_X$ denotes the unit closed ball of $X$):
\begin{equation}
\lambda_0=\sup_{x\in X\setminus\{0\}}\frac{\langle R_\gamma x,x\rangle_X}{\lVert x\rVert_X^2}=\max_{x\in B_{X}}\langle R_\gamma x,x\rangle_X
\end{equation}
is the largest eigenvalue of $R_\gamma$ and is equal to the Rayleigh quotient $\frac{\langle R_\gamma x_0,x_0\rangle_X}{\lVert x_0\rVert_X^2}$ where $x_0$ is any corresponding eigenvector. A similar interpretation is valid for every $n\in\mathbb{N}$:
\begin{equation*}
 \lambda_n=\max_{x\in B_X\cap span(x_0,...,x_{n-1})^\perp}\langle R_\gamma x,x\rangle_X.
\end{equation*}
Keeping this interpretation in mind, we can now consider the Banach case.

\section{Gaussian measures in Banach spaces}

In the context of Banach spaces, the previous spectral decomposition of the covariance operator doesn't make sense anymore. Nevertheless, we will show in section \ref{sec_principle} that the Rayleigh quotient is well defined in this context (lemma \ref{lem_existence}). Combining this and a simple decomposition method (lemma \ref{lem_split}), we give in section \ref{sec_construction} an iterative decomposition scheme of a Gaussian measure. Main analysis and results are given in the last section \ref{sec_analysis}.

\subsection{Rayleigh quotient and split decomposition}
\label{sec_principle}

The first lemma in this section is an existence result of particular linear functionals based on a compactness property. The second one provides a method to separate a Banach space into two components with respect to a linear functional and a Gaussian measure. These results are given independently to emphasize that lemma \ref{lem_split} could be combined with different linear functionals to define other iterative decomposition schemes (see section \ref{sec_construction}).    

\begin{lemma}
	Let $\gamma$ be a Gaussian measure on $(X,\mathcal{B}(X))$ a separable Banach space and set $\lambda_0=\sup_{f\in B_{X^*}}\langle R_\gamma f,f\rangle_{X,X^*}\in[0,+\infty]$. Then
	\begin{equation*}
	\exists f_0\in B_{X^*},\;\lambda_0=\langle R_\gamma f_0,f_0\rangle_{X,X^*}.
	\end{equation*}
	Moreover, we may assume $\lVert f_0\rVert_{X^*}=1$.
	\label{lem_existence}
\end{lemma}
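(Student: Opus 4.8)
The plan is to prove existence of a maximizer for the Rayleigh-type quantity $\lambda_0 = \sup_{f \in B_{X^*}} \langle R_\gamma f, f\rangle_{X,X^*}$ via a compactness argument, noting that the map $f \mapsto \langle R_\gamma f, f\rangle_{X,X^*}$ is continuous for the weak-$*$ topology on $X^*$ and that $B_{X^*}$ is weak-$*$ compact. First I would invoke the Banach--Alaoglu theorem: the closed unit ball $B_{X^*}$ is compact in the weak-$*$ topology $\sigma(X^*, X)$. Since $X$ is separable, this ball is moreover weak-$*$ metrizable, so I can work with sequences rather than nets, which is convenient. I would then take a maximizing sequence $(f_n)_n \subset B_{X^*}$ with $\langle R_\gamma f_n, f_n\rangle \to \lambda_0$, and extract a weak-$*$ convergent subsequence $f_{n_k} \rightharpoonup^* f_0 \in B_{X^*}$.

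The crux is to pass to the limit in the quadratic form, i.e.\ to show $\langle R_\gamma f_{n_k}, f_{n_k}\rangle \to \langle R_\gamma f_0, f_0\rangle$. I expect this continuity to be the main obstacle, since a quadratic expression is not obviously continuous under merely weak-$*$ convergence of one factor. The key is the nuclearity (hence compactness) of $R_\gamma$, recalled in the preliminaries. I would use the integral representation $\langle R_\gamma f, f\rangle_{X,X^*} = C_\gamma(f,f) = \int_X \langle x, f\rangle_{X,X^*}^2 \, \gamma(dx)$. For each fixed $x \in X$, weak-$*$ convergence $f_{n_k} \rightharpoonup^* f_0$ gives $\langle x, f_{n_k}\rangle \to \langle x, f_0\rangle$ pointwise, hence $\langle x, f_{n_k}\rangle^2 \to \langle x, f_0\rangle^2$. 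To exchange limit and integral I would dominate: since $\|f_{n_k}\|_{X^*} \le 1$, one has $|\langle x, f_{n_k}\rangle|^2 \le \|x\|_X^2$, and by Fernique's theorem $\int_X \|x\|_X^2 \, \gamma(dx) < +\infty$, so dominated convergence yields $C_\gamma(f_{n_k}, f_{n_k}) \to C_\gamma(f_0, f_0)$. This gives $\langle R_\gamma f_0, f_0\rangle = \lambda_0$, establishing the existence of $f_0 \in B_{X^*}$ attaining the supremum; in particular $\lambda_0 < +\infty$.

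Finally, to obtain the normalization $\|f_0\|_{X^*} = 1$, I would dispose of the degenerate case and then argue by homogeneity. If $\lambda_0 = 0$ then $R_\gamma = 0$, the measure $\gamma$ is the Dirac mass at the origin, and any $f_0$ of norm one works trivially. If $\lambda_0 > 0$, then the maximizer $f_0$ cannot satisfy $\|f_0\|_{X^*} < 1$: indeed, if it did, the rescaled functional $f_0 / \|f_0\|_{X^*}$ would lie in $B_{X^*}$ and, by the quadratic homogeneity of the form, would give $\langle R_\gamma (f_0/\|f_0\|), f_0/\|f_0\|\rangle = \lambda_0 / \|f_0\|_{X^*}^2 > \lambda_0$, contradicting the definition of $\lambda_0$ as the supremum. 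Hence $\|f_0\|_{X^*} = 1$, completing the proof.
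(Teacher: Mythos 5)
Your proof is correct, and it diverges from the paper's at exactly the one nontrivial step: passing to the limit in the quadratic form along the weak-$*$ convergent maximizing (sub)sequence. The paper does this through the characteristic functional: $\hat\gamma(f_n)\to\hat\gamma(f_\infty)$ by dominated convergence (the integrand $e^{i\langle x,f_n\rangle_{X,X^*}}$ is bounded by $1$, so no moment estimate is needed at that step), and then the Gaussian formula $\hat\gamma(f)=\exp\left(-\tfrac{1}{2}\langle R_\gamma f,f\rangle_{X,X^*}\right)$ converts this into $\langle R_\gamma f_n,f_n\rangle_{X,X^*}\to\langle R_\gamma f_\infty,f_\infty\rangle_{X,X^*}$; strict positivity of $\hat\gamma(f_\infty)$ simultaneously rules out $\lambda_0=+\infty$. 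You instead apply dominated convergence directly to $C_\gamma(f,f)=\int_X\langle x,f\rangle_{X,X^*}^2\,\gamma(dx)$, dominating by $\lVert x\rVert_X^2$ and invoking Fernique for $\int_X\lVert x\rVert_X^2\,\gamma(dx)<+\infty$. Both are legitimate; yours costs a second-moment hypothesis but buys generality: it never uses Gaussianity, so it proves the lemma for any Radon measure with finite second moment --- precisely the ``slight modification'' the paper itself alludes to in its Comments (item 2) when extending the decomposition to the mean-square setting. You are also more explicit than the paper on two points it leaves terse: metrizability of $(B_{X^*},\sigma(X^*,X))$ via separability of $X$, which justifies extracting a subsequence rather than a subnet, and the homogeneity argument forcing $\lVert f_0\rVert_{X^*}=1$ when $\lambda_0>0$. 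One stray remark you should delete: you announce that ``the key is the nuclearity (hence compactness) of $R_\gamma$,'' but your proof never uses it --- the domination by $\lVert x\rVert_X^2$ does all the work, and compactness of the operator plays no role in this lemma.
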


\begin{proof}[Proof of lemma \ref{lem_existence}]
	Let $(f_n)_n\in B_{X^*}$ be a maximizing sequence:
	\begin{equation*}
	\langle R_\gamma f_n,f_n\rangle_{X,X^*}\to\lambda_0\in[0,+\infty].
	\end{equation*}
	From the weak-star compactness of $B_{X^*}$ (see Banach-Alaoglu theorem), we can suppose that $f_n\rightharpoonup f_\infty$ for the $\sigma(X^*,X)$-topology where $f_\infty \in B_{X^*}$ . This implies that
	\begin{equation*}
	\hat{\gamma}(f_n)=\int_{X}e^{i\langle x,f_n\rangle_{X,X^*}}\gamma(dx)\to\int_{X}e^{i\langle x,f_\infty\rangle_{X,X^*}}\gamma(dx)=\hat{\gamma}(f_\infty),
	\end{equation*}
	using Lebesgue's convergence theorem. From equations \ref{eq_characteristic_functional} and \ref{eq_covariance_operator}, we conclude that $\langle R_\gamma f_n,f_n\rangle_{X,X^*}\to\langle R_\gamma f_\infty,f_\infty\rangle_{X,X^*}$. Hence $\lambda_0=\langle R_\gamma f_\infty,f_\infty\rangle_{X,X^*}\in\mathbb{R}_{+}$. If $\lambda_0>0$, then $\lVert f_\infty\rVert_{X^*}=1$ and we can take $f_0=f_\infty$. In the degenerate case $\lambda_0=0$, we have $R_\gamma =0$ and any $f_0$ of unit norm is appropriate.
\end{proof}

We will now show how to split both $X$ and $\gamma$, given any $f\in X^*$ of non trivial Rayleigh quotient (in the previous sense).

\begin{lemma}
	Let $\gamma \neq \delta_0$ be a non trivial Gaussian measure on a separable Banach space $(X,\mathcal{B}(X))$. Pick $f_0\in X^*$ such that $\lVert f_0\rVert_{X^*}=1$ and $\lambda_0=\langle R_\gamma f_0,f_0\rangle_{X,X^*}>0$. Set $P_0:x\in X\to \langle x,f_0\rangle_{X,X^*}x_0$, $R_\gamma f_0=\lambda_0x_0$ and $h_0=\sqrt{\lambda_0}x_0$, then we have the following properties.
	\begin{enumerate}
		\item $\langle x_0,f_0\rangle_{X,X^*} = 1$ and $\lVert h_0\rVert_{\gamma}=1.$
		\item $P_0$ is the projection on $X$ with range $\mathbb{R}x_0$ and null space $\ker(f_0)=\lbrace x\in X,\;\langle x,f_0\rangle_{X,X^*}=0\rbrace$. Furthermore, the restriction $Q_0$ of $P_0$ on $H(\gamma)$ is the orthogonal projection onto $\mathbb{R}h_0$:
		\begin{equation*}
		h\in H(\gamma),\;\langle h,f_0\rangle_{X,X^*}x_0=\langle h,h_0\rangle_\gamma h_0.
		\end{equation*}
		\item According to the decomposition $x=P_0x+(I-P_0)x$ in $X$, the Gaussian measure $\gamma$ can be decomposed as
		\begin{equation*}
		\gamma = \gamma_{\lambda_0}*\gamma_1,
		\end{equation*}
		where $\gamma_{\lambda_0}=\gamma\circ P_0^{-1}$ and $\gamma_1=\gamma_0\circ (I-P_0)^{-1}$ are Gaussian measures with respective covariance operators:
		\begin{eqnarray*}
			R_{\lambda_0}:f\in X^*&\to&\lambda_0\langle x_0,f\rangle_{X,X^*}x_0,\\
			R_{\gamma_1}:f\in X^*&\to&R_{\gamma}f-R_{\lambda_0}f.
		\end{eqnarray*}
		In particular,
		\begin{eqnarray*}
		 R_{\gamma}f=\lambda_0\langle x_0,f\rangle_{X,X^*}x_0+R_{\gamma_1}f.
		 \end{eqnarray*}
		\item The Cameron-Martin space $H(\gamma)$ is decomposed as
		\begin{equation*}
		H(\gamma)=\mathbb{R}h_0\oplus H(\gamma_1),
		\end{equation*}
		where $H(\gamma_1)=(I-Q_0)(H(\gamma))=(\mathbb{R}h_0)^\perp$ equipped with the inner product of $H(\gamma)$ is the Cameron-Martin space of $\gamma_1$.
		\item For each $t\in\mathbb{R}$, denote by $tx_0+\gamma_1$ the Gaussian measure on $X$ centered at $tx_0$ with covariance operator $R_{\gamma_1}$. Then, $\gamma^t$ is the conditional probability distribution of $x\in X$ given $f_0(x)=t$:
		\begin{equation*}
		\forall B\in\mathcal{B}(X),\;\gamma^t(B)=\gamma_1(B-tx_0)=\gamma(B\vert f_0=t).
		\end{equation*}
		Moreover, $f_0$ is $\mathcal{N}(0,\lambda_0)$ and the deconditioning formula is as follows:
		\begin{equation*}
		 \gamma(B)=\int_{\mathbb{R}}\gamma^t(B)\frac{e^{-\frac{t^2}{2\lambda_0}}}{\sqrt{2\pi\lambda_0}}dt.
		\end{equation*}
			\end{enumerate}
	\label{lem_split}
\end{lemma}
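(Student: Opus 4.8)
The plan is to establish the five claims in sequence, since each relies on its predecessors. Claims (1) and (2) are direct computations from the definitions together with the reproducing property. For (1), writing $x_0 = R_\gamma f_0/\lambda_0$ gives $\langle x_0, f_0\rangle_{X,X^*} = \lambda_0^{-1}\langle R_\gamma f_0, f_0\rangle_{X,X^*} = 1$, and expressing $h_0 = R_\gamma(f_0/\sqrt{\lambda_0})$ lets the reproducing property reduce $\lVert h_0\rVert_\gamma^2$ to $\langle x_0, f_0\rangle_{X,X^*} = 1$. For (2), idempotency of $P_0$ and the identification of its range and kernel follow at once from $P_0 x = \langle x, f_0\rangle_{X,X^*}x_0$ together with $\langle x_0, f_0\rangle_{X,X^*} = 1$; for the restriction $Q_0$, I would use the reproducing property to turn $\langle h, f_0\rangle_{X,X^*}$ into $\lambda_0\langle h, x_0\rangle_\gamma$ and then substitute $x_0 = h_0/\sqrt{\lambda_0}$, landing on $Q_0 h = \langle h, h_0\rangle_\gamma h_0$, which is the orthogonal projection onto $\mathbb{R}h_0$ since $\lVert h_0\rVert_\gamma = 1$.

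For (3), I would first identify the two covariance operators by expanding the covariance functions of the pushforwards: from $\langle P_0 x, f\rangle_{X,X^*} = \langle x, f_0\rangle_{X,X^*}\langle x_0, f\rangle_{X,X^*}$ one reads off $R_{\lambda_0}$, and expanding $\langle(I-P_0)x,\cdot\rangle_{X,X^*}$ while using $C_\gamma(f,f_0) = \lambda_0\langle x_0,f\rangle_{X,X^*}$ gives $R_{\gamma_1} = R_\gamma - R_{\lambda_0}$. Both measures are Gaussian, being pushforwards of $\gamma$ under continuous linear maps. The crux is the convolution identity: my plan is to push $\gamma$ forward by $x\mapsto(P_0 x,(I-P_0)x)$ and verify that the cross-covariance $\int_X\langle P_0 x,f\rangle_{X,X^*}\langle(I-P_0)x,g\rangle_{X,X^*}\gamma(dx)$ vanishes (again via $C_\gamma(f_0,g) = \lambda_0\langle x_0,g\rangle_{X,X^*}$); for jointly Gaussian variables this yields independence of the two components, so $\gamma$ is the law of their sum, i.e. $\gamma = \gamma_{\lambda_0}*\gamma_1$. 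Equivalently, one may simply multiply characteristic functionals, using $R_\gamma = R_{\lambda_0}+R_{\gamma_1}$.

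For (4), the orthogonal splitting of $H(\gamma)$ is immediate once $Q_0$ is known to be an orthogonal projection, so the real content is identifying $(\mathbb{R}h_0)^\perp$ with $H(\gamma_1)$. The key identity I would establish is $(I-Q_0)R_\gamma f = R_{\gamma_1}f$, obtained from $\langle R_\gamma f,h_0\rangle_\gamma = \langle h_0,f\rangle_{X,X^*} = \sqrt{\lambda_0}\langle x_0,f\rangle_{X,X^*}$. Since $R_\gamma(X^*)$ is dense in $H(\gamma)$ and $I-Q_0$ is continuous, this shows $R_{\gamma_1}(X^*) = (I-Q_0)R_\gamma(X^*)$ is dense in $(\mathbb{R}h_0)^\perp$. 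It then remains to check that the $H(\gamma_1)$ and $H(\gamma)$ inner products agree on this dense set: the two reproducing properties give $\langle R_{\gamma_1}f,R_{\gamma_1}g\rangle_{\gamma_1} = C_{\gamma_1}(f,g)$ and $\langle R_{\gamma_1}f,R_{\gamma_1}g\rangle_\gamma = C_\gamma(f',g')$ with $f' = f-\langle x_0,f\rangle_{X,X^*}f_0$, and the latter collapses to $C_{\gamma_1}(f,g)$ because $\langle R_{\gamma_1}f,f_0\rangle_{X,X^*} = C_{\gamma_1}(f,f_0) = 0$. Passing to completions then identifies the two Hilbert spaces isometrically. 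I expect this step to be the main obstacle, being the only one that combines a density argument with an isometry check rather than a bare computation.

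Finally, (5) is a consequence of the independence found in (3). On $(X,\mathcal{B}(X),\gamma)$ the scalar $T(x) = \langle x,f_0\rangle_{X,X^*}$ and the vector $W(x) = (I-P_0)x$ are independent, with $T\sim\mathcal{N}(0,\lambda_0)$ (variance $C_\gamma(f_0,f_0) = \lambda_0$) and $W\sim\gamma_1$, and $x = T(x)x_0+W(x)$. Conditioning on $f_0 = t$ pins $T$ to $t$ while leaving $W\sim\gamma_1$, so the conditional law of $x$ is that of $tx_0+W$, namely $\gamma^t$, giving $\gamma^t(B) = \gamma_1(B-tx_0)$; that this is a genuine regular conditional distribution is confirmed by the disintegration itself. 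The deconditioning formula is then Fubini over the independent product: $\gamma(B) = \mathbb{P}(Tx_0+W\in B) = \int_{\mathbb{R}}\gamma_1(B-tx_0)\,\frac{e^{-t^2/(2\lambda_0)}}{\sqrt{2\pi\lambda_0}}\,dt$.
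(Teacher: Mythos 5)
Your proposal is correct and follows essentially the same route as the paper's appendix proof: reproducing-property computations for (1)--(2), a vanishing cross-covariance plus Gaussian independence (equivalently, factorization of the characteristic functional) for the convolution identity in (3), a density-plus-isometry argument identifying $R_{\gamma_1}(X^*)=(I-Q_0)R_\gamma(X^*)$ inside $(\mathbb{R}h_0)^\perp$ for (4), and the independence/disintegration argument for (5). The only cosmetic differences are that the paper phrases (3) via the dual decomposition $f=P_0^*f+(I-P_0^*)f$ rather than the joint vector $(P_0x,(I-P_0)x)$, and verifies the isometry in (4) via the Pythagorean theorem rather than your direct computation $C_\gamma(f',g')=C_{\gamma_1}(f,g)$; these are equivalent.
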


The proof is straightforward and is given in the appendix. Concerning the last property on conditioning, it is worth noting that the conditional covariance operator $R_{\gamma_1}$ does not depend of the particular value $t$ of the random variable $f_0\in X^*$. We will now use both of these lemmas to build a complete decomposition of any Gaussian measure $\gamma$.

\subsection{Iterative decomposition of a Gaussian measure}
\label{sec_construction}

Consider a (centered) Gaussian measure $\gamma$ on a separable Banach space $(X,\mathcal{B}(X))$. The initial step of the decomposition is to split $X$ and $\gamma$ according to lemma \ref{lem_split} using $f_0\in X^*$ given by lemma \ref{lem_existence}. The same process is applied to the \textit{residual} Gaussian measure $\gamma_1$ defined in lemma \ref{lem_split}, and so on and so forth. Now, we formalize the resulting iterative decomposition scheme. \\

Define $\gamma_0 = \gamma$ (initialization). By induction on $n \in \mathbb{N}$ (iteration), we define the Gaussian measure $\gamma_{n+1}$ of covariance operator $R_{\gamma_{n+1}}$ such that
\begin{equation*}
\forall f\in X^*,\;R_\gamma f = \sum_{k=0}^{n}\lambda_k\langle x_k,f\rangle_{X,X^*}x_k +R_{\gamma_{n+1}}f
\end{equation*}
where $\lambda_n =\max_{f\in B_{X^*}}\langle R_{\gamma_n}f,f\rangle_{X,X^*}$ and where $x_n$ is defined by the relation $R_{\gamma_n}f_n = \lambda_n x_n$ with $f_n$ chosen such that $\lambda_n = \langle R_{\gamma_n}f_n,f_n\rangle_{X,X^*}$.\\
    
From lemma \ref{lem_split}, we have the orthogonal decomposition for all $n$
\begin{equation*}
H(\gamma)=span(h_0, ...,h_n)\oplus H(\gamma_{n+1})
\end{equation*}
where 	$h_n =\sqrt{\lambda_n}x_n.$
If for some $n$, $\lambda_{n+1}=0$, then $R_{\gamma_{n+1}} = 0$ and $H(\gamma_{n+1})=\lbrace 0\rbrace$, which means that $R_{\gamma}$ is a finite-rank operator and $H(\gamma)=span(h_0, ...,h_{n}) = span(x_0, ...,x_{n})$ a finite-dimensional linear space. This means that $\gamma$ is a finite-dimensional Gaussian measure with support equal to its Cameron-Martin space. Theorem \ref{thm_decomposition} gives the properties of this decomposition in the general case where $H(\gamma)$ is infinite-dimensional.

\begin{theorem}
	Suppose $H(\gamma)$ is infinite-dimensional and keep previous notations, we have the following properties.
	\begin{enumerate}
		\item $(h_n)_n$ is an orthonormal sequence in $H(\gamma)$.
		\item $(x_n)_n$ and $(f_n)_n$ are satisfying the following relations:
		\begin{enumerate}
			\item $\forall n\in\mathbb{N},\;\lVert x_n\rVert_{X}=\langle x_n,f_n\rangle_{X,X^*} = 1$,
			\item $\forall(k,l)\in\mathbb{N}^2,\;k>l,\;\langle x_k,f_l\rangle_{X,X^*} = 0$.
		\end{enumerate}
		\item Let $Q_n : h\in H(\gamma)\to Q_{n}h=\sum_{k=0}^{n}\langle h,h_k\rangle_{\gamma}h_k$ be the orthogonal projection onto the linear space $span(h_0, ..., h_n) = span(x_0, ..., x_n)$ in $H(\gamma)$. Then, we have $Q_{n}h=\sum_{k=0}^{n}\langle h - Q_{k-1}h,f_k\rangle_{X,X^*}x_k,$ with the convention that $Q_{-1} = 0$.
		\item Define $P_{n}$ on $X$ by $ P_{n}x=\sum_{k=0}^{n}\langle x - P_{k-1}x,f_k\rangle_{X,X^*}x_k,$ with the same convention $P_{-1} = 0$. Then, $P_n$ is the projection onto $span(x_0, ..., x_n)$ and null space $\lbrace x\in X:\;\langle x,f_k\rangle_{X,X^*}=0 \text{ for } k = 0, ..., n\rbrace.$ Furthermore, the operator $P_n$ restricted to $H(\gamma)$ is equal to $Q_n$.
		\item According to the decomposition $x=P_nx+(I-P_n)x$ in $X$, the Gaussian measure $\gamma$ can be decomposed as $\gamma = \gamma_{\lambda_0, ...,\lambda_n}*\gamma_{n+1}$		where $\gamma_{\lambda_0, ...,\lambda_n} =\gamma\circ P_n^{-1}$ is a Gaussian measure with covariance operator
		\begin{eqnarray*}
			R_{\lambda_0, ...,\lambda_n}:f\in X^* &\to& \sum_{k=0}^{n}\lambda_k\langle x_k,f\rangle_{X,X^*}x_k.
			\end{eqnarray*} 
			Furthermore, we have $\gamma_{n+1}=\gamma\circ (I-P_n)^{-1}$ and the relation
			\begin{equation*}
			 R_\gamma = R_{\lambda_0, ...,\lambda_n} + R_{\gamma_{n+1}}.
			\end{equation*}   
		\item The Cameron-Martin space $H(\gamma)$ is decomposed as
		\begin{equation*}
		H(\gamma)=span(h_0, ...,h_n)\oplus H(\gamma_{n+1}),
		\end{equation*} where $H(\gamma_{n+1})=(I-Q_n)(H(\gamma))$ equipped with the inner product of $H(\gamma)$ is the Cameron-Martin space of the Gaussian measure $\gamma_{n+1}$.
		\item Let $x_{n}^{*} = (I - P_{n-1})^{*}f_n$ for $n \geq 0$. Then, $\forall n, R_{\gamma}x_{n}^{*} = \lambda_n x_n$. The random variables $x_{n}^{*}$ are independent $\mathcal{N}(0,\lambda_n)$, and 
		\begin{equation*}
		\forall n,\;P_{n}x=\sum_{k=0}^{n}\langle x,x_{k}^{*} \rangle_{X,X^*}x_k.
		\end{equation*}
		For the computation of the dual basis $(x_{n}^{*})_n$, we have the recurrence formula
				\begin{equation*}  
				x_{n}^{*} = f_n - P_{n-1}^{*}f_n
				\end{equation*}
with $P_{n-1}^{*}f_n=\sum_{k=0}^{n-1}\langle x_{k},f_n \rangle_{X,X^*}x_{k}^{*}$ and  $x_{0}^{*}=f_0.$ \\
Furthermore, $\gamma_{\lambda_0, ...,\lambda_n} = \gamma_{\lambda_0}*...*\gamma_{\lambda_n}$ where $\gamma_{\lambda_n}$ is the distribution of the random vector $x \rightarrow \langle x,x_n^* \rangle_{X,X^*}x_n$ for all $n$.

		\item Let $h_{n}^{*} = \sqrt{\lambda_n}^{-1} x_{n}^{*}$ for $n \geq 0$. Then, we have $R_{\gamma}h_{n}^{*} = h_n,$ and the random variables $h_{n}^{*}$ are independent $\mathcal{N}(0,1)$.
		\item For each $t=(t_0,...,t_n)\in\mathbb{R}^{n+1}$, denote by $\sum_{k=0}^{n}t_kx_k+\gamma_{n+1}$ the Gaussian measure on $X$ centered at $\sum_{k=0}^{n}t_kx_k$ with covariance operator $R_{\gamma_{n+1}}$. Then, $\gamma^t=\sum_{k=0}^{n}t_kx_k+\gamma_{n+1}$ is the conditional probability distribution of $x\in X$ given $x_{0}^{*}(x)=t_0,...,x_{n}^{*}(x)=t_n$:
		\begin{equation*}
		\forall B\in\mathcal{B}(X),\;\gamma^t(B)=\gamma_{n+1}\left(B-\sum_{k=0}^{n}t_kx_k\right)=\gamma(B\vert x_{0}^{*}=t_0,...,x_{n}^{*}=t_n).
		\end{equation*}
		The deconditioning formula is
		\begin{equation*}
		 \gamma(B)=\int_{\mathbb{R}^n}\gamma^t(B)
		\prod_{k=0}^n
		\frac{e^{-\frac{t_{k}^2}{2\lambda_k}}}{\sqrt{2\pi\lambda_k}}dt_k
		\end{equation*}
		\label{assertion_conditionnement}
	\end{enumerate}
	\label{thm_decomposition}
\end{theorem}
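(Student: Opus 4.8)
The whole proof is a single induction on $n$ that bootstraps Lemma \ref{lem_split} applied to the residual measures $\gamma_n$; since $H(\gamma)$ is infinite-dimensional we have $\lambda_n>0$ at every stage, so the lemma genuinely applies to $\gamma_n$ at each step and the scheme never terminates. I would first harvest the three geometric assertions by direct iteration. Telescoping part (3) of the lemma gives $\gamma=\gamma_{\lambda_0}*\cdots*\gamma_{\lambda_n}*\gamma_{n+1}$, and since covariance operators add under convolution this yields assertion (5), with the pushforward identifications $\gamma_{\lambda_0,\dots,\lambda_n}=\gamma\circ P_n^{-1}$ and $\gamma_{n+1}=\gamma\circ(I-P_n)^{-1}$ coming from the projection $P_n$ of (4), together with the additive relation $R_\gamma=R_{\lambda_0,\dots,\lambda_n}+R_{\gamma_{n+1}}$. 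Iterating part (4) of the lemma gives the orthogonal splitting of (6), and because the inner product of each $H(\gamma_{n+1})$ is a restriction of that of $H(\gamma)$, the $(h_n)_n$ are orthonormal, which is (1). For (2), $\langle x_n,f_n\rangle_{X,X^*}=1$ is part (1) of the lemma; $\|x_n\|_X=1$ follows from $\|f_n\|_{X^*}=1$ (giving $\|x_n\|_X\ge1$) together with Cauchy--Schwarz for the positive form $C_{\gamma_n}$ and the Rayleigh bound, which yield $\|R_{\gamma_n}f_n\|_X\le\lambda_n$ and hence $\|x_n\|_X\le1$; finally $\langle x_k,f_l\rangle_{X,X^*}=0$ for $k>l$ because $x_k=R_{\gamma_k}f_k/\lambda_k\in H(\gamma_k)\subset\ker f_l$.

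Next I would treat the projections. The one-step orthogonal projection onto $\mathbb{R}h_k$ inside $H(\gamma_k)$ is $h\mapsto\langle h,f_k\rangle_{X,X^*}x_k$ by Lemma \ref{lem_split}(2), so a Gram--Schmidt induction produces the recursion for $Q_n$ in (3). Defining $P_n$ on $X$ by the same recursion and using the triangular relations $\langle x_k,f_l\rangle_{X,X^*}=\delta_{kl}$ for $k\ge l$ from (2), I would check by induction that $P_n$ is idempotent with range $span(x_0,\dots,x_n)$ and null space $\bigcap_{k\le n}\ker f_k$, which is (4); that $P_n|_{H(\gamma)}=Q_n$ is immediate since the two recursions coincide on $H(\gamma)$. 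Idempotency gives $P_nx_j=x_j$ for $j\le n$, so expanding $x_j$ in the $(x_k)$ and using linear independence yields the dual-basis relation $\langle x_j,x_k^*\rangle_{X,X^*}=\delta_{jk}$ needed below.

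Assertion (7) is the heart of the matter and the step I expect to be the main obstacle. Setting $x_n^*=(I-P_{n-1})^*f_n$ identifies the coefficients in (4) as $\langle x,x_n^*\rangle_{X,X^*}$, giving $P_nx=\sum_{k\le n}\langle x,x_k^*\rangle_{X,X^*}x_k$ and the recurrence $x_n^*=f_n-P_{n-1}^*f_n$. The identity $R_\gamma x_n^*=\lambda_n x_n$ is the delicate point, and I expect to prove it through the operator identity $R_{\gamma_n}=(I-P_{n-1})R_\gamma(I-P_{n-1})^*$, which I read off from $\gamma_n=\gamma\circ(I-P_{n-1})^{-1}$ and the definition of the covariance. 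Using the additive relation from (5) and $(I-P_{n-1})x_k=0$ for $k\le n-1$, the $span(x_0,\dots,x_{n-1})$ contribution to $R_\gamma x_n^*$ drops out; what remains equals $R_{\gamma_n}f_n=\lambda_n x_n$, once one notes $(I-P_{n-1})^*x_k^*=0$ for $k\le n-1$ (because $P_{n-1}(I-P_{n-1})=0$ annihilates the coordinates $x_k^*$). The $x_n^*$ are jointly Gaussian under $\gamma$, and the computation $C_\gamma(x_j^*,x_k^*)=\langle R_\gamma x_j^*,x_k^*\rangle_{X,X^*}=\lambda_j\langle x_j,x_k^*\rangle_{X,X^*}=\lambda_j\delta_{jk}$ shows they are uncorrelated, hence independent $\mathcal{N}(0,\lambda_n)$; assertion (8) is then the rescaling $h_n^*=\lambda_n^{-1/2}x_n^*$, for which $R_\gamma h_n^*=\lambda_n^{-1/2}\lambda_n x_n=h_n$.

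Finally, assertion (9) follows by iterating the conditioning statement Lemma \ref{lem_split}(5), or equivalently by reading off the product structure established above: $\gamma_{\lambda_0,\dots,\lambda_n}=\gamma_{\lambda_0}*\cdots*\gamma_{\lambda_n}$ is carried by $span(x_0,\dots,x_n)$ and is the law of $\sum_k t_kx_k$ with independent coordinates $t_k=x_k^*\sim\mathcal{N}(0,\lambda_k)$, while $\gamma_{n+1}=\gamma\circ(I-P_n)^{-1}$ is carried by the complementary null space. Conditioning on $(x_0^*,\dots,x_n^*)=(t_0,\dots,t_n)$ therefore yields the translate $\sum_k t_kx_k+\gamma_{n+1}$, and the deconditioning integral is simply the product of the one-dimensional $\mathcal{N}(0,\lambda_k)$ densities furnished by (7).
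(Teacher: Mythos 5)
Your proposal is correct and follows essentially the same route as the paper's appendix proof: both bootstrap Lemma \ref{lem_split} inductively through the residual measures $\gamma_n$, using the same triangularity relations, projection recursions, and uncorrelated-implies-independent Gaussian arguments. The only notable local variation is in assertion (7), where you derive $R_\gamma x_n^*=\lambda_n x_n$ from the pushforward identity $R_{\gamma_n}=(I-P_{n-1})R_\gamma(I-P_{n-1})^*$ together with $(I-P_{n-1})^*x_k^*=0$, whereas the paper pairs $R_\gamma x_n^*$ against an arbitrary $h\in H(\gamma)$ via the reproducing property in $H(\gamma_n)$ --- both computations are sound and equivalent in substance.
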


This theorem is a straightforward extension of lemma \ref{lem_split} and a proof is given in the appendix. It remains to see that this decomposition is complete, namely that we have
\begin{equation*}
	\gamma=*_n\gamma_{\lambda_n}
\end{equation*}
according to the decomposition of the covariance operator
\begin{equation*}
	R_{\gamma}=\sum_{n}\lambda_n\langle x_n,.\rangle_{X,X^*}x_n.
\end{equation*}

\subsection{Asymptotic analysis.}
\label{sec_analysis}

In this section, we suppose that $H(\gamma)$ is infinite-dimensional and we use notations of the previous section. The two following lemmas will be essential for the main result of this paper (theorem \ref{thm_HilbertBasis}). 

\begin{lemma}
	We have $B_{H(\gamma_n)} = B_{H(\gamma)}\cap span(h_0,...,h_{n-1})^\perp$ for all $n$ and
	\begin{equation}
	\sqrt{\lambda_n}=\sup_{f\in B_{X^*}}\sup_{h\in B_{H(\gamma_n)}}\langle h,f\rangle_{X,X^*}.
	\label{equation_doublesup}
	\end{equation}
	\label{lem_doublesup}
\end{lemma}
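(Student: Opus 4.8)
The plan is to prove the two assertions in turn, the first being a direct reading of the decomposition already established and the second resting on the reproducing property of the Cameron--Martin space together with Cauchy--Schwarz.

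For the set identity, I would appeal to property (6) of Theorem \ref{thm_decomposition}, which identifies $H(\gamma_n) = (I - Q_{n-1})(H(\gamma)) = \mathrm{span}(h_0,\ldots,h_{n-1})^\perp$ \emph{equipped with the inner product inherited from} $H(\gamma)$. Because the norm of $H(\gamma_n)$ is thus the restriction of the norm of $H(\gamma)$ to this subspace, a vector $h$ lies in $B_{H(\gamma_n)}$ if and only if it lies in $\mathrm{span}(h_0,\ldots,h_{n-1})^\perp$ and satisfies $\lVert h\rVert_\gamma \le 1$; this is precisely the asserted intersection $B_{H(\gamma)}\cap \mathrm{span}(h_0,\ldots,h_{n-1})^\perp$.

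For the double supremum, I would fix $f\in X^*$ and evaluate the inner supremum first. Since $R_{\gamma_n}f$ belongs to $H(\gamma_n)$ (the range of the covariance operator is dense in the Cameron--Martin space), the reproducing property gives $\langle h,f\rangle_{X,X^*}=\langle h,R_{\gamma_n}f\rangle_{\gamma_n}$ for every $h\in H(\gamma_n)$. Cauchy--Schwarz, saturated by $h=R_{\gamma_n}f/\lVert R_{\gamma_n}f\rVert_{\gamma_n}$, then yields
\[
\sup_{h\in B_{H(\gamma_n)}}\langle h,f\rangle_{X,X^*}=\lVert R_{\gamma_n}f\rVert_{\gamma_n}.
\]
Applying the reproducing property once more, now with $h=R_{\gamma_n}f$, gives $\lVert R_{\gamma_n}f\rVert_{\gamma_n}^2=\langle R_{\gamma_n}f,f\rangle_{X,X^*}$, so the inner supremum is exactly $\sqrt{\langle R_{\gamma_n}f,f\rangle_{X,X^*}}$.

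It then remains to take the outer supremum over $f\in B_{X^*}$. Using that $t\mapsto\sqrt{t}$ is continuous and increasing on $[0,+\infty)$ to commute it with the supremum, I obtain
\[
\sup_{f\in B_{X^*}}\sup_{h\in B_{H(\gamma_n)}}\langle h,f\rangle_{X,X^*}=\sqrt{\sup_{f\in B_{X^*}}\langle R_{\gamma_n}f,f\rangle_{X,X^*}}=\sqrt{\lambda_n},
\]
the last step being the very definition of $\lambda_n$. I do not expect a serious obstacle here; the one point requiring care is consistency of the metrics, namely that $\langle\cdot,\cdot\rangle_{\gamma_n}$ is the inner product induced from $H(\gamma)$ and that $R_{\gamma_n}f$ indeed lands in $H(\gamma_n)$, so that the reproducing property is legitimately invoked and the norms appearing in the two parts agree.
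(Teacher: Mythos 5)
Your proof is correct and follows essentially the same route as the paper's: the set identity is read off from assertion (6) of Theorem \ref{thm_decomposition}, and the inner supremum is computed via the reproducing property with Cauchy--Schwarz saturated at $h=R_{\gamma_n}f/\lVert R_{\gamma_n}f\rVert_{\gamma_n}$, giving $\sqrt{\langle R_{\gamma_n}f,f\rangle_{X,X^*}}$ before taking the outer supremum. Your added care about commuting $\sqrt{\cdot}$ with the supremum and about $R_{\gamma_n}f$ lying in $H(\gamma_n)$ (with the degenerate case $R_{\gamma_n}f=0$ trivially consistent) only makes explicit what the paper leaves implicit.
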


\begin{proof}[Proof of lemma \ref{lem_doublesup}]
	Since $H(\gamma)=span(h_0,...,h_{n-1})\oplus H(\gamma_n)$ and $\lVert .\rVert_{\gamma_n}=\lVert .\rVert_\gamma$ on $H(\gamma_n)$ (see theorem \ref{thm_decomposition}, assertion (6)), we get 
	\begin{equation*}
	B_{H(\gamma)}\cap span(h_0,...,h_{n-1})^\perp=B_{H(\gamma_n)}.
	\end{equation*} But, for $h\in H(\gamma_n)$, $\langle h,f\rangle_{X,X^*}=\langle h,R_{\gamma_n}f\rangle_{\gamma_n}$ and $\sup_{h\in B_{H(\gamma_n)}}\langle h,f\rangle_{X,X^*}$ is attained for $h=\frac{R_{\gamma_n}f}{\lVert R_{\gamma_n}f\rVert_{\gamma_n}}$ (if $R_{\gamma_n}f\not=0$). Thus, $\sup_{h\in B_{H(\gamma_n)}}\langle h,f\rangle_{X,X^*}=\sqrt{\langle R_{\gamma_n}f,f\rangle_{X,X^*}}$.
\end{proof}

\begin{lemma}
	The sequence $(\lambda_n)_{n\geq 0}$ is non-increasing and $\lambda_n\to 0$.
	\label{lem_decroissance}
\end{lemma}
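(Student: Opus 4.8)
The plan is to prove two separate claims: monotonicity and convergence to zero.

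For monotonicity, I would argue directly from the construction. Recall that $\lambda_n = \max_{f \in B_{X^*}} \langle R_{\gamma_n} f, f \rangle_{X,X^*}$, and by Lemma \ref{lem_doublesup} we have the clean characterization $\sqrt{\lambda_n} = \sup_{f \in B_{X^*}} \sup_{h \in B_{H(\gamma_n)}} \langle h, f \rangle_{X,X^*}$. The key structural fact (from Theorem \ref{thm_decomposition}, assertion (6), and Lemma \ref{lem_doublesup}) is that $B_{H(\gamma_{n+1})} = B_{H(\gamma)} \cap \mathrm{span}(h_0, \dots, h_n)^\perp \subseteq B_{H(\gamma)} \cap \mathrm{span}(h_0, \dots, h_{n-1})^\perp = B_{H(\gamma_n)}$, since adding one more orthogonality constraint only shrinks the admissible set. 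Taking the double supremum over the smaller set $B_{H(\gamma_{n+1})}$ can only yield something no larger, so $\sqrt{\lambda_{n+1}} \le \sqrt{\lambda_n}$, hence $\lambda_{n+1} \le \lambda_n$. Thus $(\lambda_n)_n$ is non-increasing and, being non-negative, converges to some limit $\ell \ge 0$.

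For the convergence $\lambda_n \to 0$, the plan is to use the nuclearity/summability of the spectrum. I would show $\ell = 0$ by contradiction: suppose $\ell > 0$, so that $\lambda_n \ge \ell > 0$ for every $n$. For each $n$ choose the maximizing functional $f_n \in B_{X^*}$ and recall $h_n = \sqrt{\lambda_n}\, x_n$ with $R_{\gamma_n} f_n = \lambda_n x_n$, so that $(h_n)_n$ is an orthonormal sequence in $H(\gamma)$ (Theorem \ref{thm_decomposition}, assertion (1)). The crucial point is to connect the $\lambda_n$ back to a summable quantity: the covariance operator $R_\gamma$ is nuclear (stated in Section 1), and the decomposition $R_\gamma = \sum_k \lambda_k \langle x_k, \cdot \rangle_{X,X^*} x_k$ together with $\lVert x_k \rVert_X = 1$ (assertion (2a)) gives $\sum_k \lambda_k = \sum_k \lVert h_k \rVert_X^2 < +\infty$ by the nuclear property, exactly as in the Hilbert case. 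A convergent series of non-negative terms forces its general term to zero, so $\lambda_n \to 0$, contradicting $\lambda_n \ge \ell > 0$.

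The main obstacle I anticipate is justifying the summability $\sum_n \lambda_n < +\infty$ rigorously in the Banach setting, rather than merely invoking it. The paper asserts nuclearity of $R_\gamma$ (citing \cite{Vak1987}), but one must verify that the trace computed against the particular basis $(h_n)_n$ arising from the greedy construction equals $\sum_n \lambda_n$, and that this is finite. An alternative, perhaps cleaner, route that avoids leaning on the abstract nuclear trace is to use the almost-sure convergence of the series $\sum_n \xi_n h_n$ in $X$ (from Section 1): convergence of the series in $X$ forces $\xi_n h_n \to 0$ almost surely, hence $\lVert h_n \rVert_X \to 0$; combined with the Bessel-type control linking $\lambda_n$ to $\lVert h_n \rVert_X^2$, this yields $\lambda_n \to 0$ directly. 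Either way, the delicate step is transferring the Hilbert-space summability intuition into a valid Banach-space argument, and I would favor whichever of these the authors have set up cleanly in the preceding lemmas.
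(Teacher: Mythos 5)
Your monotonicity argument is exactly the paper's: Lemma \ref{lem_doublesup} plus the nesting $B_{H(\gamma_{n+1})}\subseteq B_{H(\gamma_n)}$ gives $\lambda_{n+1}\le\lambda_n$. The problem is your main argument for $\lambda_n\to 0$: the summability $\sum_n\lambda_n<+\infty$ is simply false in the Banach setting. The paper itself computes, for the classical Wiener measure on $\mathcal{C}([0,1],\mathbb{R})$, that $\lambda_n=2^{-(p+2)}$ for $n=2^p+k$, $k=0,\dots,2^p-1$, so that
\begin{equation*}
\sum_n\lambda_n = 1+\tfrac14+2\cdot\tfrac18+4\cdot\tfrac{1}{16}+\dots=+\infty,
\end{equation*}
as noted explicitly in comment (4) of the paper. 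The Hilbert-case identity $\sum_n\lambda_n=\mathrm{trace}(R_\gamma)$ has no analogue here: nuclearity of $R_\gamma\colon X^*\to X$ guarantees \emph{some} representation $R_\gamma=\sum_k g_k\otimes y_k$ with $\sum_k\lVert g_k\rVert_{X^*}\lVert y_k\rVert_X<\infty$, but it does not follow that the greedy pairs $(\lambda_n,x_n)$ furnish such a representation, precisely because $(x_n)$ is orthonormal only in $H(\gamma)$, whose norm is not comparable to $\lVert\cdot\rVert_X$. (Your identity $\sum_n\lambda_n=\sum_n\lVert h_n\rVert_X^2$ is correct, since $\lVert x_n\rVert_X=1$; it is that quantity which is infinite in the Wiener example.) So the contradiction you aim for never gets off the ground.

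Your fallback route (almost sure convergence of $\sum_n\xi_n h_n$ in $X$ forcing $\lVert h_n\rVert_X\to 0$) is closer to something workable, but as stated it is circular: the convergence theorem you invoke (theorem 3.5.1 in \cite{Bog1998}) applies to an orthonormal \emph{basis} of $H(\gamma)$, and at this stage $(h_n)$ is only known to be an orthonormal sequence --- completeness is Theorem \ref{thm_HilbertBasis}, which is proved \emph{using} Lemma \ref{lem_decroissance}. One could repair this by extending $(h_n)$ to a full basis and then using a second Borel--Cantelli argument to pass from $\xi_n h_n\to 0$ a.s.\ to $\lVert h_n\rVert_X\to 0$ (along any subsequence with $\lVert h_{n_k}\rVert_X\ge\varepsilon$ one has $\lvert\xi_{n_k}\rvert\ge 1$ infinitely often a.s.). The paper instead does something deterministic and more elementary: Bessel's inequality gives $\langle h_n,f\rangle_{X,X^*}=\langle h_n,R_\gamma f\rangle_\gamma\to 0$ for every $f\in X^*$, i.e.\ $h_n\rightharpoonup 0$ weakly in $X$; since the unit ball of $H(\gamma)$ is precompact in $X$ (corollary 3.2.4 in \cite{Bog1998}), every subsequence of $(h_n)$ has a strongly convergent sub-subsequence, whose limit must be $0$ by uniqueness of weak limits, whence $\sqrt{\lambda_n}=\lVert h_n\rVert_X\to 0$. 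That compactness property of the Cameron--Martin ball is the ingredient your proposal is missing, and it is what replaces the (false) trace-summability intuition in the Banach setting.
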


\begin{proof}[Proof of lemma \ref{lem_decroissance}]
	By lemma \ref{lem_doublesup} and the expression \ref{equation_doublesup}, we see that $\lambda_{n+1}\leq \lambda_n$. Moreover, $(h_n)$ is an orthonormal system in $H(\gamma)$, hence
	\begin{equation*}
	\forall f\in X^*,\;\langle h_n,f\rangle_{X,X^*}=\langle h_n,R_{\gamma}f\rangle_\gamma\to 0,
	\end{equation*}
	as a consequence of Bessel's inequality. In other words, we have that $h_n\rightharpoonup 0$ for the weak topology of $X$. Since the unit ball of $H(\gamma)$ is precompact in $X$ (corollary 3.2.4 p.101 in \cite{Bog1998}), we can extract a subsequence $(h_{n_k})_k$ such that $h_{n_k}\to_k h_\infty$ for the strong topology of $X$. By unicity of limit in the topological vector space $X$ equipped with the weak topology, we deduce that $h_\infty=0$ in $X$. Therefore, $\lVert h_{n_k}\rVert_X=\sqrt{\lambda_{n_k}}\to_k 0$, which ends the proof. 
\end{proof}

The two above lemmas are the ingredients to prove now that the orthonormal family $(h_n)_n$ is a Hilbert basis of $H(\gamma)$ in $R_{\gamma}(X^*)$ as it is discussed in \cite{Vak1993}.

\begin{theorem}
	$(h_n)_{n \geq 0}=(R_\gamma h_n^*)_{n \geq 0}$ is a Hilbert basis of $H(\gamma)$.
	\label{thm_HilbertBasis}
\end{theorem}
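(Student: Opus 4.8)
The plan is to prove that the orthonormal sequence $(h_n)_n$ from Theorem \ref{thm_decomposition} is \emph{complete} in $H(\gamma)$, meaning that the only element orthogonal to every $h_n$ is $0$. Combined with orthonormality (Theorem \ref{thm_decomposition}, assertion (1)), this is precisely what it means for $(h_n)_n$ to be a Hilbert basis. So I would begin by fixing an arbitrary $h\in H(\gamma)$ satisfying $\langle h,h_n\rangle_\gamma=0$ for all $n$, and aim to show $h=0$.

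First I would translate this orthogonality into membership in the residual spaces. Since $h\perp h_k$ for every $k$, we have $h\in\operatorname{span}(h_0,\dots,h_{n-1})^\perp$ for all $n$. By Lemma \ref{lem_doublesup}, which identifies $B_{H(\gamma_n)}$ with $B_{H(\gamma)}\cap\operatorname{span}(h_0,\dots,h_{n-1})^\perp$, together with the fact that $\lVert\cdot\rVert_{\gamma_n}=\lVert\cdot\rVert_\gamma$ on $H(\gamma_n)$ (Theorem \ref{thm_decomposition}, assertion (6)), this gives $h\in H(\gamma_n)$ for every $n$, with $\lVert h\rVert_{\gamma_n}=\lVert h\rVert_\gamma$. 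Thus $h$ lies simultaneously in every residual Cameron-Martin space.

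The heart of the argument is then a norm estimate. Assume for contradiction that $h\neq 0$ and normalize so that $\lVert h\rVert_\gamma=1$; then $h\in B_{H(\gamma_n)}$ for every $n$. Applying the double supremum of Lemma \ref{lem_doublesup} to this single fixed vector $h$ (dropping the inner supremum over $B_{H(\gamma_n)}$ in our favor) yields, for each $n$,
\[
\sup_{f\in B_{X^*}}\langle h,f\rangle_{X,X^*}\le\sqrt{\lambda_n}.
\]
Since $B_{X^*}$ is symmetric, Hahn-Banach identifies the left-hand side with $\lVert h\rVert_X$, so $\lVert h\rVert_X\le\sqrt{\lambda_n}$ for all $n$. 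Invoking Lemma \ref{lem_decroissance} ($\lambda_n\to 0$), we conclude $\lVert h\rVert_X=0$, hence $h=0$ in $X$. Because $H(\gamma)$ embeds continuously and injectively into $X$, this forces $h=0$ in $H(\gamma)$, contradicting $\lVert h\rVert_\gamma=1$. Therefore the orthogonal complement of $\overline{\operatorname{span}}(h_n)_n$ in $H(\gamma)$ is trivial, and $(h_n)_n=(R_\gamma h_n^*)_n$ is a Hilbert basis.

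I expect the main obstacle to be the norm estimate just described: the subtlety is to apply Lemma \ref{lem_doublesup} to the fixed vector $h$ rather than as a bare supremum, and then to recognize that the residual supremum over $B_{X^*}$ is exactly the ambient norm $\lVert h\rVert_X$. Everything else — the reduction to completeness, the passage to the common intersection of the $H(\gamma_n)$, and the use of $\lambda_n\to 0$ — is routine, and the injectivity of the embedding $H(\gamma)\hookrightarrow X$ is what finally transfers $\lVert h\rVert_X=0$ back to $h=0$ in $H(\gamma)$.
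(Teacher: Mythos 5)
Your proof is correct and takes essentially the same route as the paper: both arguments take $h$ orthogonal to every $h_n$, use Lemma \ref{lem_doublesup} to bound $\langle h,f\rangle_{X,X^*}\leq\sqrt{\lambda_n}\lVert h\rVert_\gamma$ for all $f\in B_{X^*}$, and let $\lambda_n\to 0$ (Lemma \ref{lem_decroissance}) to force $h=0$. The only cosmetic differences are your contradiction framing and your explicit identification of $\sup_{f\in B_{X^*}}\langle h,f\rangle_{X,X^*}$ with $\lVert h\rVert_X$, where the paper simply concludes that $\langle h,f\rangle_{X,X^*}=0$ for all $f\in X^*$ and hence $h=0$.
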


\begin{proof}[Proof of theorem \ref{thm_HilbertBasis}]
	Let $h\in H(\gamma)$ such that $\forall n\in\mathbb{N},\;\langle h,h_n\rangle_\gamma=0$. Then, using lemma \ref{lem_doublesup}, we have
	\begin{equation*}
	\forall n\in\mathbb{N},\;\forall f\in B_{X^*},\;\langle h,f\rangle_{X,X^*}\leq \sqrt{\lambda_n}\lVert h\rVert_\gamma,
	\end{equation*}
	which implies that $\langle h,f\rangle_{X,X^*}=0$ for all $f \in X^*$. Therefore, $h = 0$ and $span(h_n,\; n \geq 0)$ is dense in $H(\gamma)$.
\end{proof}

We give now the two claimed results of this paper.

\begin{corollary}
	The covariance operator can be decomposed as follows
	\begin{equation*}
	R_\gamma=\sum_{n \geq 0} \lambda_n\langle x_n,.\rangle_{X,X^*} x_n,
	\end{equation*}
	where the convergence is in $\mathcal{L}(X^*,X)$. More precisely, the nth step truncation error is
	\begin{equation*}
	\left\lVert R_\gamma-\sum_{k=0}^{n}\lambda_k\langle x_k,.\rangle_{X,X^*}x_k\right\rVert=\lambda_{n+1},
	\end{equation*}
	where $\left\lVert \cdot \right\rVert$ stands for the operator norm in $\mathcal{L}(X^*,X)$.
	\label{cor_estimation}
\end{corollary}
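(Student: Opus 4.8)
The plan is to recognize the truncation error as a residual covariance operator and then compute its operator norm directly. By assertion (5) of theorem \ref{thm_decomposition}, the partial sum $\sum_{k=0}^{n}\lambda_k\langle x_k,\cdot\rangle_{X,X^*}x_k$ is exactly $R_{\lambda_0,\dots,\lambda_n}$ and $R_\gamma=R_{\lambda_0,\dots,\lambda_n}+R_{\gamma_{n+1}}$; hence the truncation error operator is precisely the residual covariance operator $R_{\gamma_{n+1}}$. The entire statement therefore reduces to the single identity $\lVert R_{\gamma_{n+1}}\rVert=\lambda_{n+1}$ in $\mathcal{L}(X^*,X)$, after which convergence of the series follows at once from lemma \ref{lem_decroissance}, which gives $\lambda_{n+1}\to 0$.

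To compute the operator norm, I would first express it through duality. Since $\lVert y\rVert_X=\sup_{g\in B_{X^*}}\langle y,g\rangle_{X,X^*}$ for every $y\in X$ (a consequence of Hahn-Banach), the operator norm unfolds as a double supremum
\begin{equation*}
\lVert R_{\gamma_{n+1}}\rVert=\sup_{f\in B_{X^*}}\lVert R_{\gamma_{n+1}}f\rVert_X=\sup_{f\in B_{X^*}}\sup_{g\in B_{X^*}}\langle R_{\gamma_{n+1}}f,g\rangle_{X,X^*}.
\end{equation*}
The lower bound $\lVert R_{\gamma_{n+1}}\rVert\geq\lambda_{n+1}$ is then immediate: restricting to the diagonal $g=f$ recovers $\sup_{f\in B_{X^*}}\langle R_{\gamma_{n+1}}f,f\rangle_{X,X^*}=\lambda_{n+1}$ by the very definition of $\lambda_{n+1}$ in the iterative construction.

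The key step, and the one requiring the most care, is the matching upper bound. It rests on the fact that $(f,g)\mapsto\langle R_{\gamma_{n+1}}f,g\rangle_{X,X^*}$ is a symmetric positive semidefinite bilinear form on $X^*\times X^*$, being the covariance function of the Gaussian measure $\gamma_{n+1}$ as recalled in the preliminaries. Cauchy-Schwarz for such a form yields, for all $f,g\in B_{X^*}$,
\begin{equation*}
\langle R_{\gamma_{n+1}}f,g\rangle_{X,X^*}\leq\sqrt{\langle R_{\gamma_{n+1}}f,f\rangle_{X,X^*}}\,\sqrt{\langle R_{\gamma_{n+1}}g,g\rangle_{X,X^*}}\leq\lambda_{n+1},
\end{equation*}
the last inequality holding because $\lambda_{n+1}$ is the supremum of the Rayleigh quotient over $B_{X^*}$. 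Taking the double supremum gives $\lVert R_{\gamma_{n+1}}\rVert\leq\lambda_{n+1}$, and combined with the lower bound this establishes the exact truncation error formula.

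Finally, convergence of the series in $\mathcal{L}(X^*,X)$ is a direct consequence: the truncation error equals $\lambda_{n+1}$, which tends to $0$ by lemma \ref{lem_decroissance}, so the partial sums converge to $R_\gamma$ in operator norm. I do not anticipate any genuine obstacle; the only subtle point is the passage from the symmetry and positivity of the covariance form to the Cauchy-Schwarz bound that forces the off-diagonal supremum not to exceed the diagonal one $\lambda_{n+1}$, which is exactly what distinguishes the operator norm computation from the plain Rayleigh quotient.
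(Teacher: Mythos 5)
Your proof is correct, and the heart of it coincides with the paper's: both reduce the statement to the identity $\lVert R_{\gamma_{n+1}}\rVert=\lambda_{n+1}$ via theorem \ref{thm_decomposition}(5), and both obtain the upper bound by Cauchy-Schwarz applied to the positive symmetric form $(f,g)\mapsto\langle R_{\gamma_{n+1}}f,g\rangle_{X,X^*}$ after writing $\lVert R_{\gamma_{n+1}}f\rVert_X=\sup_{g\in B_{X^*}}\langle R_{\gamma_{n+1}}f,g\rangle_{X,X^*}$. Where you differ, your route is slightly leaner in two respects. For the lower bound, the paper exhibits the explicit witness $R_{\gamma_{n+1}}f_{n+1}=\lambda_{n+1}x_{n+1}$ and invokes $\lVert x_{n+1}\rVert_X=1$ (assertion (2) of theorem \ref{thm_decomposition}), whereas you simply restrict the double supremum to the diagonal $g=f$, which needs nothing beyond the definition of $\lambda_{n+1}$ as $\max_{f\in B_{X^*}}\langle R_{\gamma_{n+1}}f,f\rangle_{X,X^*}$. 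More notably, the paper opens its proof by invoking theorem \ref{thm_HilbertBasis} (completeness of $(h_n)_n$ in $H(\gamma)$) to expand $R_\gamma f=\sum_{n\geq 0}\langle R_\gamma f,h_n\rangle_\gamma h_n$, while your argument never uses completeness at all: operator-norm convergence of the partial sums to $R_\gamma$ falls out directly from the exact error formula together with $\lambda_{n+1}\to 0$ (lemma \ref{lem_decroissance}). This makes visible that the corollary, as stated, is logically independent of theorem \ref{thm_HilbertBasis} and rests only on the iterative decomposition and the decay of $(\lambda_n)_n$; the cost is that, on its own, your proof does not re-derive the Hilbert-basis fact that the paper's expansion encodes, but that fact is not part of the statement being proved. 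Both proofs are valid; yours trims an unnecessary dependency, the paper's places the corollary in the narrative flow following theorem \ref{thm_HilbertBasis}.
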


\begin{proof}[Proof of corollary \ref{cor_estimation}]
	From theorem \ref{thm_HilbertBasis}, we know that $(h_n)_n$ is a Hilbert basis of $H(\gamma)$. It suffices to write
	\begin{equation*}
	\forall f\in X^*,\;R_\gamma f=\sum_{n\geq 0}\langle R_\gamma f,h_n\rangle_\gamma h_n,
	\end{equation*}
	and use the reproducing property. The truncation error norm is
	\begin{equation*}
	\left\lVert R_\gamma-\sum_{k=0}^{n}\lambda_k\langle x_k,.\rangle_{X,X^*} x_k\right\rVert=\sup_{f\in B_{X^*}}\lVert R_{\gamma_{n+1}}f\rVert_X.
	\end{equation*}
	But,
	\begin{equation*}
		\lVert R_{\gamma_{n+1}}f\rVert_X=\sup_{g\in B_{X^*}}\langle R_{\gamma_{n+1}}f,g\rangle_{X,X^*}\leq \lambda_{n+1}
	\end{equation*}
	by the Cauchy-Schwarz inequality. Since $R_{\gamma_{n+1}}f_{n+1}=\lambda_{n+1}x_{n+1}$ and $\lVert x_{n+1}\rVert_X=1$, we have $\lVert R_{\gamma_{n+1}}f_{n+1}\rVert_X=\lambda_{n+1}$. Hence
	\begin{equation*}
		\left\lVert R_\gamma-\sum_{k=0}^{n}\lambda_k\langle x_k,.\rangle_{X,X^*} x_k\right\rVert=\lambda_{n+1}\to 0.
	\end{equation*}
\end{proof}

\begin{corollary} Remind the definition $h_{n}^{*} = \sqrt{\lambda_n}^{-1} x_{n}^{*}$ with $x_{n}^{*}=(I-P_{n-1})^*f_n$ for $n \geq 1$ and $x_{0}^{*}=f_0$. Then, we have the decomposition in $X$
	\begin{equation*}
	x=\sum_{n}\langle x,h_n^*\rangle_{X,X^*}h_n,\;\gamma\;a.e.,
	\end{equation*}	
	where the random variables $h_n^*$ are independent $\mathcal{N}(0,1)$. In equivalent form, let $(\xi_n)_n$ be a sequence of independent standard normal variables on $(\Omega,\mathcal{F},\mathbb{P})$. Then the random series
	\begin{equation*}
	\sum_{n}\sqrt\lambda_n\xi_n(\omega)x_n
	\end{equation*}
	defines a $X$-valued random Gaussian vector with distribution $\gamma$.
	\label{cor_KL}  
\end{corollary}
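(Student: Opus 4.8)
The plan is to combine Theorem \ref{thm_HilbertBasis}, which identifies $(h_n)_n$ as a Hilbert basis of $H(\gamma)$, with the general representation recalled in the preliminaries (equation \ref{random_series}, i.e.\ Theorem 3.5.1 of \cite{Bog1998}): for any orthonormal basis of $H(\gamma)$ and any sequence $(\xi_n)_n$ of independent standard normal variables, the series $\sum_n\xi_n h_n$ converges almost surely in $X$ and its sum is distributed according to $\gamma$. Since $h_n=\sqrt{\lambda_n}x_n$, this immediately yields the ``equivalent form'': $\sum_n\sqrt{\lambda_n}\xi_n(\omega)x_n$ converges almost surely in $X$ to a Gaussian vector of law $\gamma$. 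The remaining and genuinely new content is the pointwise identity $x=\sum_n\langle x,h_n^*\rangle_{X,X^*}h_n$ holding $\gamma$-almost everywhere.

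To obtain it, I would work directly on the probability space $(X,\mathcal{B}(X),\gamma)$ and take $\xi_n(x)=\langle x,h_n^*\rangle_{X,X^*}$. By Theorem \ref{thm_decomposition}(8) these are independent $\mathcal{N}(0,1)$, so the general result applies and the series $S(x):=\sum_n\langle x,h_n^*\rangle_{X,X^*}h_n$ converges $\gamma$-almost everywhere in $X$. Its partial sums are exactly the projections $P_n$: using $x_k^*=\sqrt{\lambda_k}\,h_k^*$ and $h_k=\sqrt{\lambda_k}\,x_k$, each term rewrites as $\langle x,x_k^*\rangle_{X,X^*}x_k$, so by Theorem \ref{thm_decomposition}(7) one has $\sum_{k=0}^n\langle x,h_k^*\rangle_{X,X^*}h_k=P_nx$. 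Thus $P_nx\to S(x)$ $\gamma$-almost everywhere, and it remains to identify the limit $S$ with the identity map.

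The heart of the proof is this identification, which I would carry out with a second moment computation, one continuous functional at a time. Fix $f\in X^*$. The reproducing property gives $\langle h_n,f\rangle_{X,X^*}=\langle h_n,R_\gamma f\rangle_\gamma$, and Parseval's identity in $H(\gamma)$ together with the relation $\langle R_\gamma f,g\rangle_{X,X^*}=\langle R_\gamma f,R_\gamma g\rangle_\gamma$ yields $\sum_n\langle h_n,f\rangle_{X,X^*}^2=\langle R_\gamma f,f\rangle_{X,X^*}<+\infty$. Hence the real series $\langle S(x),f\rangle_{X,X^*}=\sum_n\langle x,h_n^*\rangle_{X,X^*}\langle h_n,f\rangle_{X,X^*}$, being a sum of orthogonal centered Gaussians, converges in $L^2(\gamma)$ as well. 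Using $R_\gamma h_n^*=h_n$ and the symmetry of $R_\gamma$, one computes $\int_X\langle S(x),f\rangle_{X,X^*}^2\,\gamma(dx)=\sum_n\langle h_n,f\rangle_{X,X^*}^2=\langle R_\gamma f,f\rangle_{X,X^*}$ and $\int_X\langle x,f\rangle_{X,X^*}\langle S(x),f\rangle_{X,X^*}\,\gamma(dx)=\sum_n\langle h_n,f\rangle_{X,X^*}^2=\langle R_\gamma f,f\rangle_{X,X^*}$, so that $\int_X\big(\langle x,f\rangle_{X,X^*}-\langle S(x),f\rangle_{X,X^*}\big)^2\,\gamma(dx)=0$. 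Therefore $\langle x,f\rangle_{X,X^*}=\langle S(x),f\rangle_{X,X^*}$ for $\gamma$-almost every $x$, for each fixed $f$.

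Finally I would upgrade this to $x=S(x)$ $\gamma$-almost everywhere by separability: choosing a countable family $(f_k)_k\subset X^*$ separating the points of $X$, the union of the corresponding exceptional sets is still $\gamma$-null, and on its complement $\langle x-S(x),f_k\rangle_{X,X^*}=0$ for all $k$ forces $x=S(x)$. The independence and $\mathcal{N}(0,1)$ law of the coefficients $h_n^*$ are already granted by Theorem \ref{thm_decomposition}(8). The main obstacle is precisely this identification step: the abstract theorem only guarantees that $S$ has law $\gamma$, not that it reconstructs $x$ itself, and bridging that gap is what the second moment argument and the separating family achieve. An alternative route would observe that $(I-P_n)x\to x-S(x)$ $\gamma$-almost everywhere while $(I-P_n)x$ has law $\gamma_{n+1}$ whose covariance has operator norm $\lambda_{n+1}\to 0$ by Corollary \ref{cor_estimation}, forcing the limit law to be $\delta_0$; the second moment computation is more self-contained, and I would prefer it.
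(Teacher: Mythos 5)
Your proposal is correct, and it is in fact more self-contained than what the paper provides: the paper gives no written proof of this corollary, treating it as an immediate consequence of Theorem \ref{thm_HilbertBasis} together with the representation result recalled around (\ref{random_series}) (Theorem 3.5.1, p.~112 in \cite{Bog1998}). The subtlety you isolate is genuine. As restated in Section 1 of the paper, the cited result only says that $\sum_n \xi_n h_n$ converges almost surely with law $\gamma$, and applying it on $(X,\mathcal{B}(X),\gamma)$ with $\xi_n = h_n^*$ yields only that the sum $S$ has law $\gamma$, not that $S(x)=x$. The paper's implicit resolution is that Bogachev's theorem in its full form already asserts the pointwise identity $x=\sum_n \hat{h}_n(x)h_n$ $\gamma$-a.e., where $\hat{h}_n$ is the measurable linear functional associated with $h_n$; since $h_n=R_\gamma h_n^*$ with $h_n^*\in X^*$, one has $\hat{h}_n=h_n^*$ $\gamma$-a.e., and the corollary follows at once. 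Your route re-proves this identification rather than importing it: the partial sums equal $P_n x$ by Theorem \ref{thm_decomposition}(7), the coefficients are independent $\mathcal{N}(0,1)$ by assertion (8), and your second-moment computation $\int_X\bigl(\langle x,f\rangle_{X,X^*}-\langle S(x),f\rangle_{X,X^*}\bigr)^2\gamma(dx)=0$ is correct --- Parseval with Theorem \ref{thm_HilbertBasis} gives $\sum_n\langle h_n,f\rangle_{X,X^*}^2=\langle R_\gamma f,f\rangle_{X,X^*}$, the cross term uses $R_\gamma h_n^*=h_n$, and the sum/integral interchange is licensed by the $L^2(\gamma)$-convergence you note --- after which a countable separating family in $X^*$ (available since $X$ is separable, via Hahn--Banach on a dense sequence) upgrades the conclusion to $x=S(x)$ $\gamma$-a.e. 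What your version buys is independence from the measurable-linear-functional machinery underlying Bogachev's statement, at the cost of redoing, in this special case where the coefficients are genuine continuous functionals, the identification the cited theorem already contains. Your sketched alternative is also sound: $(I-P_n)x\to x-S(x)$ a.e. gives convergence in law, $(I-P_n)$ pushes $\gamma$ to $\gamma_{n+1}$ by Theorem \ref{thm_decomposition}(5), and $\lVert R_{\gamma_{n+1}}\rVert=\lambda_{n+1}\to 0$ (Corollary \ref{cor_estimation} with Lemma \ref{lem_decroissance}) forces the limiting characteristic functional to be identically $1$, hence the limit law is $\delta_0$ and $x=S(x)$ a.e.
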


\section{Decomposition of the classical Wiener measure}

Let $\gamma$ be the standard Wiener measure on $X=\mathcal{C}([0,1],\mathbb{R})$, the space of all real continuous functions on the interval $[0,1]$ which is a Banach space if equipped with the supremum norm. The Riesz-Markov representation theorem allows to identify $X^*$ with the linear space of all bounded signed measures on $[0,1]$ equipped with the norm of total variation. In this context, the dual pairing is
\begin{equation*}
\forall x \in X,\;\forall\mu\in X^*,\;\langle x,\mu\rangle_{X,X^*}=\int_{0}^{1}x(t)\mu(dt).
\end{equation*}
The Cameron-Martin space associated to $\gamma$ is the usual Sobolev space $H_0^1([0,1],\mathbb{R})$, defined by
\begin{equation*}
H_0^1([0,1],\mathbb{R})=\left\lbrace f\in X,\;\forall t\in[0,1],\;f(t)=\int_{0}^{t}f'(s)ds,\;f'\in L^2([0,1],\mathbb{R})\right\rbrace
\end{equation*}
and associated inner product $\langle f_1,f_2\rangle_\gamma=\langle f_1',f_2'\rangle_{L^2}$. The covariance operator $R_\gamma$ satisfies
\begin{equation*}
\langle R_\gamma \mu,\mu\rangle_{X,X^*}=Var\left(\int_{0}^{1}W_t\mu(dt)\right)
\end{equation*}
where $(W_t)_{t\in[0,1]}$ is the standard Wiener process.  Using Fubini's theorem, we easily get
\begin{equation*}
\langle R_\gamma \mu,\mu\rangle_{X,X^*}=\iint_{[0,1]^2}t\wedge s\mu(dt)\mu(ds)= \int_{0}^{1}\mu([u,1])^2du.
\end{equation*}
Hence, $(R_\gamma\mu)'(t)=\mu([t,1])$ almost everywhere in $[0,1]$, and $R_\gamma\mu:t\in[0,1]\to\int_{0}^{t}\mu([u,1])du$. Consider now the initial step of the decomposition, that is find $f_0=\mu_0\in B_{X^*}$ such that
\begin{equation*}
\langle R_\gamma\mu_0,\mu_0\rangle_{X,X^*}=\sup_{\mu\in B_{X^*}}\langle R_\gamma\mu,\mu\rangle_{X,X^*}.
\end{equation*}
Since $\forall \mu\in B_{X^*}, \forall u\in[0,1],\;\vert\mu([u,1])\vert\leq 1$, the unique measure (up to sign) into $B_{X^*}$ maximizing $\langle R_\gamma\mu,\mu\rangle_{X,X^*}=\int_{0}^{1}\mu([u,1])^2du$ is $\mu_0=\delta_1$. Moreover,
\begin{equation*}
\lambda_0=\langle R_\gamma\mu_0,\mu_0\rangle_{X,X^*}=Var(W_1)=1
\end{equation*}
is the variance of the Wiener process at point $t=1$. Since $\mu\to\langle R_\gamma\mu,\mu\rangle_{X,X^*}$ is a non-negative quadratic functional, an usual argument shows directly that $\mu_0$ must be an extremal point of $B_{X^*}$. Thus $\mu_0=\delta_{t_0}$ for some point $t_0\in[0,1]$. And clearly, $t_0=1$, corresponding to the maximum of variance of the Wiener process. So, we have $\lambda_0=1$, $f_0=\mu_0=\delta_1$. Using the fact that 
\begin{equation*}
R_\gamma\delta_t:s\in[0,1]\to\langle R_\gamma\delta_t,\delta_s\rangle_{X,X^*}=Cov(W_t,W_s)=t\wedge s,
\end{equation*}
we get $x_0=(t\in[0,1] \to t)$ and $h_0=x_0$ (since $\lambda_0=1$). Now, we have $P_0x:t\in[0,1] \to\langle x,f_0\rangle_{X,X^*}x_0(t)=x(1)t$ and $(I-P_0)x$ is the function $t\in[0,1]\to x(t)-x(1)t$. From this, we see that $\gamma_1=\gamma\circ (I-P_0)^{-1}$ is the Gaussian measure associated to the Brownian bridge $(B_t)_{t\in[0,1]}$ with covariance kernel
\begin{equation*} 
K_1:(t,s)\in[0,1]^2\to Cov(B_t,B_s)=t\wedge s-ts.
\end{equation*}
Using now the fact that $\mu\to\langle R_{\gamma_1}\mu,\mu\rangle_{X,X^*}$ is a non-negative quadratic functional, we see that $f_1=\mu_1=\delta_{t_1}$ where $t_1=\frac{1}{2}$ is the maximum of variance of the Brownian bridge $B$. Hence, we get $\lambda_1=\frac{1}{4}$, $x_1=(t\to 4(t\wedge \frac{1}{2}-\frac{t}{2}))$ (by the relation $\lambda_1 x_1=R_{\gamma_1}\delta_{\frac{1}{2}}$) and $h_1=\frac{1}{2}x_1$. Furthermore, $x_1^* = \delta_{t_1} - \frac{1}{2}\delta_{t_0}$ and $\gamma_2=\gamma\circ (I-P_1)^{-1}$ is the Gaussian distribution of the process $(I - P_1)W : t \to W_t - W_1x_0(t) - (W_{\frac{1}{2}} - \frac{1}{2}W_1)x_1(t)$. By the assertion \ref{assertion_conditionnement} of theorem \ref{thm_decomposition}, $\gamma_2$ is the conditional distribution of $W$ given $W_1=0, W_\frac{1}{2}=0$. Using this interpretation, scale-invariance and spatial Markov properties of the Wiener process, we immediately get 
\begin{equation*} 
\lambda_n = \frac{1}{2^{p+2}}\;\text{ for }n = 2^p + k,\;k = 0, ..., 2^p-1 \;\text{ and }p \geq 0.
\end{equation*}
Furthermore, the Hilbert basis $(h_n)_{n\geq 0}$ of $H(\gamma)$ is given by $h_0(t)=t$ and $h_n(t)=\int_{0}^{t}h_n'(s)ds,\;n\geq 1$, where
\begin{equation*}
	h_n'(s)=\left\lbrace
	\begin{matrix}
	\sqrt{2^p}\text{ for }\frac{2k}{2^{p+1}}\leq s\leq \frac{2k+1}{2^{p+1}}\\
	-\sqrt{2^p}\text{ for }\frac{2k+1}{2^{p+1}}< s\leq \frac{2k+2}{2^{p+1}}\\
	0\text{ otherwise}
	\end{matrix}\right.,
\end{equation*}
if $n=2^{p}+k,\;k=0,...,2^p-1$ and $p\geq 0$. The family $(h_n')_{n\in\mathbb{N}}$ is the usual Haar basis of $L^2([0,1],\mathbb{R})$. The functions $(x_n)_{n\geq 0}$ are Schauder's functions
\begin{equation*}
	x_n(t)=\sqrt{2^{p+2}}h_n(t)
\end{equation*}
corresponding to \textit{hat functions} of height $1$ and lying above the intervals $\left[\frac{k}{2^{p}},\frac{k+1}{2^{p}}
\right]$ $(n=2^p+k)$. The resulting decomposition $\sum_{n}\sqrt{\lambda_n}\xi_n(\omega)x_n$ is the famous L\'evy-Ciesielski construction of Brownian motion on the interval $[0,1]$ (see \cite{McK1969}). The $8$ first steps (and the associated residual) of this decomposition are illustrated in figure \ref{Figure1}.

\begin{figure}
	\includegraphics[width=\textwidth]{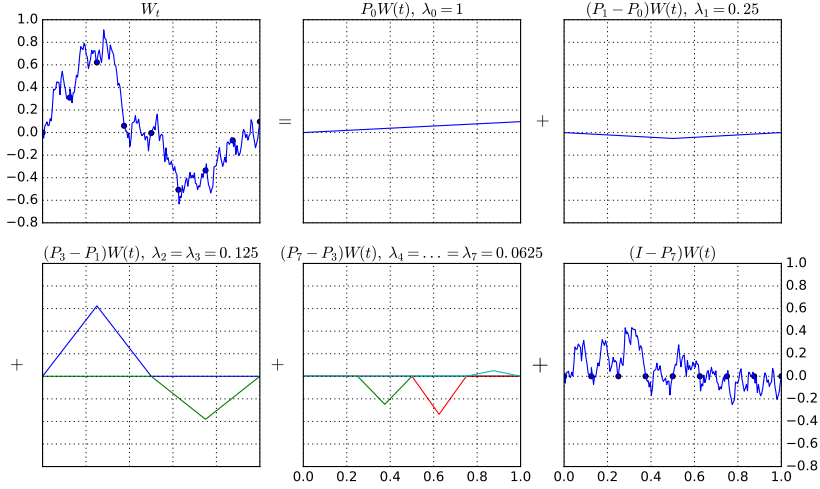}
	\caption{Decomposition of the standard Wiener measure on the 8 first steps.}
	\label{Figure1}
\end{figure}

\section{Comments}
	\begin{enumerate}
	\item For $\gamma$ a Gaussian measure on a separable Hilbert space $X$, corollary \ref{cor_estimation} is equivalent to the spectral theorem applied to the self-adjoint compact operator $R_\gamma$. In the Banach case, corollary \ref{cor_estimation} says that
	\begin{equation*}
		R_\gamma=\sum_{n \geq 0}\lambda_n\langle x_n,.\rangle_{X,X^*} x_n,
	\end{equation*}
	where $(\lambda_n)_n$ is a non-increasing sequence that converges to zero and $(x_n)_n$ is a sequence of unit norm vectors in $X$ and orthogonal in $H(\gamma)$. Furthermore, we have the same formula for the error (see comments below of its importance for applications):
	\begin{equation*}
	\left\lVert R_\gamma-\sum_{k=0}^{n}\lambda_k\langle x_k,.\rangle_{X,X^*}x_k\right\rVert=\lambda_{n+1}.
	\end{equation*} 
	Interpretation of the pairs $(\lambda_n,x_n)$ for each $n$ is the following: for $n = 0$, $x_0$ is a (unit) direction vector for a line in $X$ that has the largest variance possible ($=\lambda_0$) by a projection of norm one (namely, the projection $P_0$ in theorem \ref{thm_decomposition}). Remark that $P_0$ of norm one means $P_0$ orthogonal or self-adjoint in the Hilbert case. By considering the measure $\gamma_1 = \gamma\circ(I-P_0)^{-1}$, the vector $x_1$ is the direction vector for a line in the subspace $(I-P_0)X$ that has the largest variance possible and so on. In the Hilbert case, this decomposition process is known as (functional) principal component analysis.
	\item In this work, we assume the Radon measure $\gamma$ to be Gaussian. By a slight modification of the proof of lemma \ref{lem_existence}, the decomposition is valid if we assume only $\int_X\left\lVert x \right\rVert^2 \gamma(dx)< +\infty$ and results have to be interpreted in a mean-square sense (in particular, independence becomes non correlation and last parts of lemma \ref{lem_existence} and theorem \ref{thm_decomposition} on conditioning are valid only in the Gaussian case).
	\item The random series representation $\sum_{n\geq 0}\sqrt\lambda_n\xi_n(\omega)x_n$ in corollary \ref{cor_KL} is a generalization of the Karhunen-Lo\`eve expansion based on the corresponding decomposition of the covariance operator $R_\gamma=\sum_{n\geq 0}\lambda_n\langle x_n,.\rangle_{X,X^*} x_n$. 
	\item The decomposition of the classical Wiener measure shows that 
	\begin{equation*}
	\sum_{n} \lambda_n = 1 + \frac{1}{4} + 2\times\frac{1}{8} + 4\times\frac{1}{16} + ...= +\infty
	\end{equation*}
	due to the "multiplicity" of the values $\lambda_n$. In the Hilbert case, this sum is always finite and is the trace of the operator $R_\gamma$. Furthermore, this finite-trace property is characteristic of Gaussian measures on Hilbert spaces. Such a characterization in the Banach case is still an open problem.
	\item Gaussian hypothesis is motivated by applications both in Gaussian process regression (or Kriging, see \cite{Ras2006}) and Bayesian inverse problems (\cite{Stu2010}). As theorem \ref{thm_decomposition} indicates, we are interested in an efficient algorithm to construct a design of experiments (see \cite{Fed1972}) or a training set (functionals $(f_n)_n$ or, equivalently, $(x_n^{*})_n$). Error expression $\left\lVert R_\gamma-\sum_{k=0}^{n}\lambda_k\langle x_k,.\rangle_{X,X^*} x_k\right\rVert=\sup_{f\in B_{X^*}}\lVert R_{\gamma_{n+1}}f\rVert_X=\lambda_{n+1}$ in corollary \ref{cor_estimation} says that we have a precise quantification of uncertainty in terms of confidence interval in  the Gaussian case.
	\end{enumerate}

\section{Conclusion}

In this work, we suggest a Karhunen-Lo\`eve expansion for a Gaussian measure on a separable Banach space based on a corresponding decomposition of its covariance operator. In some sense, this decomposition generalizes the Hilbert case. L\'evy's construction of Brownian motion appears to be a particular case of such an expansion. Finally, we believe that this result will be useful both in pure and applied mathematics since it provides a canonical representation of Gaussian measures on separable Banach spaces.

%    Text of article.

%    Bibliographies can be prepared with BibTeX using amsplain,
%    amsalpha, or (for "historical" overviews) natbib style.
\bibliographystyle{amsplain}
\bibliography{Bib}
%    Insert the bibliography data here.

\appendix

\section*{Proofs}

\begin{proof}[Proof of lemma \ref{lem_split}]
	\begin{enumerate}
		\item Since $R_\gamma f_0 = \lambda_0x_0$, we have
		\begin{equation*}
		\lambda_0\langle x_0,f_0\rangle_{X,X^*}=\langle R_\gamma f_0,f_0\rangle_{X,X^*}=\lambda_0
		\end{equation*}
		and $\lambda_0>0$ implies $\langle x_0,f_0\rangle_{X,X^*}=1$. The second equality is obtained from the definition of $h_0$ and the reproducing property:
		\begin{equation*}
		\lVert h_0\rVert_\gamma^2=\langle h_0,h_0\rangle_\gamma=\langle x_0,\lambda_0x_0\rangle_\gamma=\langle x_0,R_\gamma f_0\rangle_\gamma=\langle x_0,f_0\rangle_{X,X^*}=1.
		\end{equation*}
		\item Since $P_0x_0=\langle x_0,f_0\rangle_{X,X^*}x_0=x_0$, we have $P_0^2=P_0$ and $P_0$ is clearly the projection onto $\mathbb{R}x_0$ along the null space of $f_0\in X^*$. Now, if $h\in H(\gamma)$, we get by the reproducing property:
		\begin{equation*}
		P_0h=\langle h,R_\gamma f_0\rangle_\gamma x_0=\langle h,\lambda_0 x_0\rangle_\gamma x_0=\langle h,h_0\rangle_\gamma h_0=Q_0h.
		\end{equation*}
		\item As bounded linear transformations of a (centered) Gaussian measure, both $\gamma_{\lambda_0}$ and $\gamma_1$ are (centered) Gaussian measures. Consider the decomposition in $X^*$:
	\begin{equation*}
	f=P_0^*f+(I-P_0^*)f.
	\end{equation*}
Now, the random variable $P_0^*f=\langle x_0,f\rangle_{X,X^*}f_0$ is Gaussian with variance $\langle R_{\gamma_{\lambda_0}}f,f\rangle_{X,X^*}=\lambda_0\langle x_0,f\rangle_{X,X^*}^2$ and
$(I-P_0^*)f=f-\langle x_0,f\rangle_{X,X^*}f_0$ is Gaussian with variance $\langle R_{\gamma_1}f,f\rangle_{X,X^*}$. To show that $P_0^*f$ and $(I-P_0^*)f$ are independent, we compute their covariance: 
	\begin{equation*}
	\begin{split}
	&\int_X\langle x,P_0^*f\rangle_{X,X^*}\langle x,(I-P_0^*)f\rangle_{X,X^*}\gamma(dx)\\
	&=\int_{X}\langle x_0,f\rangle_{X,X^*}\langle x,f_0\rangle_{X,X^*}\left(\langle x,f\rangle-\langle x_0,f\rangle\langle x,f\rangle\right)\gamma(dx)\\
	&=\langle x_0,f\rangle_{X,X^*}\langle R_\gamma f_0,f\rangle_{X,X^*}-\lambda_0\langle x_0,f\rangle_{X,X^*}^2\\
	&=0.
	\end{split}
	\end{equation*}
Using the characteristic function of $\gamma$, we get by independence
	\begin{equation*}
	\hat{\gamma}(f)=\int_X e^{i\langle x,(P_0^*f+(I-P_0^*)f)\rangle_{X,X^*}}\gamma(dx)=\hat{\gamma_{\lambda_0}}(f)\hat{\gamma_1}(f).
	\end{equation*}
This proves $\gamma = \gamma_{\lambda_0}*\gamma_1 $ and also $R_\gamma=R_{\gamma_{\lambda_0}}+R_{\gamma_1}$.
	\item Consider the orthogonal decomposition $H(\gamma)=\mathbb{R}h_0\oplus H_1$ where $H_1=(\mathbb{R}h_0)^\perp$. Since $R_{\gamma_{\lambda_0}}f=\lambda_0\langle x_0,f\rangle_{X,X^*}x_0=\langle R_\gamma f,h_0\rangle_\gamma h_0$ is the orthogonal projection of $R_\gamma f$ onto $\mathbb{R}h_0$, we see that $R_\gamma f= R_{\lambda_0}f+R_{\gamma_1}f$ is the corresponding orthogonal decomposition of $R_\gamma f$. Therefore, by the Pythagorean theorem,
	\begin{equation*}
	\lVert R_\gamma f\rVert_\gamma^2=\lVert R_{\gamma_0}f\rVert_\gamma^2+\lVert R_{\gamma_1}f\rVert_\gamma^2.
	\end{equation*}
Now, using the relation $R_{\gamma_{\lambda_0}}f=\lambda_0\langle x_0,f\rangle_{X,X^*}x_0$, we get  $\lVert R_{\gamma_{\lambda_0}}f\rVert_\gamma^2=\lambda_0\langle x_0,f\rangle_{X,X^*}^2=\langle R_{\gamma_{\lambda_0}}f,f\rangle_{X,X^*}$ (= $\lVert R_{\gamma_{\lambda_0}}f\rVert_{\gamma_{\lambda_0}}^2$), thus 
	\begin{equation*}
	\lVert R_{\gamma_1}f\rVert_\gamma^2=\langle R_\gamma f,f\rangle_{X,X^*}-\langle R_{\lambda_0}f,f\rangle_{X,X^*}=\langle R_{\gamma_1}f,f\rangle_{X,X^*}.
	\end{equation*}
	Using the reproducing property in the Cameron-Martin space $H(\gamma_1)$, we get  $\lVert R_{\gamma_1}f\rVert_\gamma^2=\lVert R_{\gamma_1}f\rVert_{\gamma_1}^2$.  Since       $R_{\gamma_1}(X^*)$ is dense in $H(\gamma_1)$, we conclude that $H(\gamma_1)$ is a subspace of $H_1 $ and, in particular, $\langle .,.\rangle_{\gamma_1}=\langle .,.\rangle_\gamma$. Finally, $H(\gamma_1) = H_1 $ by density of $R_\gamma(X^*)$ in $H(\gamma)$.
	\item Using $\gamma =\gamma_{\lambda_0}*\gamma_1$, we can write for all $B\in\mathcal{B}(X)$:
	\begin{equation*}
	\gamma(B)=\int_X\gamma_1(B-tx_0)\frac{e^{-\frac{t^2}{\lambda_0}}}{\sqrt{2\pi\lambda_0}}dt.
	\end{equation*}
	Since $f_0\sim N(0,\lambda_0)$, we deduce that $\gamma(B\vert f_0=t)=\gamma_1(B-tx_0)$ (as a regular conditional probability).
\end{enumerate}
\end{proof}

\begin{proof}[Proof of theorem \ref{thm_decomposition}]
	\begin{enumerate}
		\item For $n\in\mathbb{N}$, $\lVert h_n\rVert_{\gamma}=1$ by construction. If $n<m$, remark that $h_n\in span(h_0, ..., h_{m-1}) = H(\gamma_{m})^\perp$ to get 
		 $\langle h_n,h_m\rangle_\gamma=0$.        
		\item By definition of $x_n$, we have $\langle x_n,f_n\rangle_{X,X^*}=1$. Now, the reproducing property gives
		\begin{equation*}
		\forall f\in B_{X^*},\;\langle x_n,f\rangle_{X,X^*}=\langle x_n,R_{\gamma_n} f\rangle_{\gamma_n}\leq \lVert x_n\rVert_{\gamma_n}\sqrt{\langle R_{\gamma_n}f,f\rangle_{X,X^*}}.
		\end{equation*}
Using the relations $\langle R_{\gamma_n}f,f\rangle_{X,X^*}\leq\lambda_n$ and $\lVert \sqrt{\lambda_n} x_n\rVert_{\gamma_n}=\lVert h_n\rVert_{\gamma}=1$, we get $\langle x_n,f\rangle_{X,X^*}\leq1$. This proves that $\lVert x_n\rVert_{X^*}=\langle x_n,f_n\rangle_{X,X^*}=1$. \\
For $k > l$, $h_k\in H(\gamma_{l})$ and the reproducing property gives
		\begin{equation*}
		\sqrt{\lambda_{k}}\langle x_k,f_l\rangle_{X,X^*}=\langle h_k,R_{\gamma_l}f_l\rangle_{\gamma_l}=\sqrt{\lambda_l}\langle h_k,h_l\rangle_{\gamma}=0.
		\end{equation*}
Hence $\langle x_k,f_l\rangle_{X,X^*} = 0$ since $\lambda_{k} > 0$.
		\item For $h\in H(\gamma)$, we have
		\begin{equation*}
		Q_nh=\sum_{k=0}^{n}\langle h,\lambda_{k}x_k\rangle_{\gamma}x_k=\sum_{k=0}^{n}\langle h,R_{\gamma_k}f_k\rangle_{\gamma}x_k.
		\end{equation*}
		According to the orthogonal decomposition
		\begin{equation*}
			H(\gamma)=span(h_0,...,h_{k-1})\oplus H(\gamma_{k}),
		\end{equation*}
		we get that
		\begin{equation*}
			\langle h,R_{\gamma_k}f_k\rangle_\gamma=\langle h-Q_{k-1}h,R_{\gamma_k}f_k\rangle_{\gamma_k}=\langle h-Q_{k-1}h,f_k\rangle_{X,X^*},
		\end{equation*}
		which proves the result.
		\item Let $x\in X$ then $P_nx\in span(x_0,...,x_n)=range(Q_n)$ thus $P_nx\in H(\gamma)$ and $P_n(P_nx)=Q_n(P_n x)=P_n x$. Clearly, we have:
		\begin{equation*}
		\bigcap_{k=0}^n\ker(f_k)\subset\ker(P_n).
		\end{equation*}
		Conversely, if $P_nx=0$ then $P_kx=0$ for all $k\in[0,n]$ and $0=\langle x-P_{k-1}x, f_k\rangle=\langle x,f_k\rangle_{X,X^*}$, hence $\ker(P_n)\subset\bigcap_{k=0}^n\ker(f_k)$.
		\item Since $Q_n = P_n$ on $H(\gamma)$, remark first that 
		$R_{\gamma_{\lambda_0, ...,\lambda_n}}  = R_\gamma P_n^* = Q_n R_\gamma$ and also $R_{\gamma_{n+1}}  = R_\gamma (I -P_n)^* = (I - Q_n) R_\gamma$. In particular, $R_{\gamma_{\lambda_0, ...,\lambda_n}}f=\sum_{k=0}^n{\langle R_\gamma f,h_k\rangle_\gamma h_k}=\sum_{k=0}^n\lambda_k\langle x_k,f\rangle_{X,X^*}x_k$. Consider now the decomposition for $f \in X^*$:
		\begin{equation*}
	f=P_n^*f+(I-P_n)^*f. 
		\end{equation*}
		The random variable $P_n^*f$ is Gaussian with variance $\langle R_{\gamma_{\lambda_0, ...,\lambda_n}}f,f\rangle_{X,X^*}$ and $(I-P_n)^*f$ is Gaussian with variance $\langle R_{\gamma_{n+1}}f,f\rangle_{X,X^*}$. Since $\langle R_\gamma P_n^*f,(I-P_n)^*f\rangle_{X,X^*} = \langle (I-Q_n)Q_nR_\gamma f,f\rangle_{X,X^*}=0$, the random variables $P_n^*f$ and $(I-P_n)^*f$ are independent and we conclude as in lemma (3.2).
		\item The proof is similar to the proof of (4) in lemma (3.2). Introduce the space $H_{n+1}=span(h_0, ..., h_n)^\perp$, we have that $(R_{\gamma_{n+1}}(X^*),\langle .,. \rangle_{\gamma_{n+1}})$ is a subspace of $H_{n+1}$, which is sufficient to prove $H(\gamma_{n+1}) = H_{n+1}$ as Hilbert spaces.
		\item For $n \geq 0$ and $h \in H(\gamma)$, we write $\langle h,R_\gamma x_{n}^{*}\rangle_\gamma = \langle h, (I - P_{n-1})^{*}f_n \rangle_{X,X^*}$, thus
		$\langle h,R_\gamma x_{n}^{*}\rangle_\gamma =\langle (I - Q_{n-1})h, f_n \rangle_{X,X^*}=\langle (I - Q_{n-1})h, R_{\gamma_{n}}f_n \rangle_{\gamma}$. \\
		Using now the relation $R_{\gamma_{n}}f_n = \lambda_n x_n $, we finally get $\langle h,R_\gamma x_{n}^{*}\rangle_\gamma = \langle h,\lambda_n x_n\rangle_\gamma$, which proves $R_\gamma x_{n}^{*}=\lambda_n x_n$. In particular, $\langle R_\gamma x_{n}^{*}, x_{n}^{*} \rangle_{X,X^*} = \lambda_n$. In the same way, we get $\langle R_\gamma x_{m}^{*}, x_{n}^{*} \rangle_{X,X^*} = 0$ if $m \neq n$. Hence, the random variables $x_n^*$ are independent with respective variance $\lambda_n$. The computation of this sequence comes from the identity $P_n^{*}f=\sum_{k=0}^{n}\langle x_{k},f \rangle_{X,X^*}x_{k}^{*}$.
		\item This is a reformulation of the previous statement about the sequence $(x_{n}^{*})_n$.
		\item  This last assertion is a direct consequence of (5) and (7). 		
		
	\end{enumerate}
\end{proof}

\end{document}